\theoremstyle{plain}  
\newtheorem{thm}{Theorem}[section]
\newtheorem{prop}[thm]{Proposition}
\theoremstyle{definition}  
\newtheorem{defn}[thm]{Definition}
\newtheorem{ex}[thm]{Example}
\theoremstyle{remark}  
\newtheorem{rem}[thm]{Remark}
\newcommand{\setof}[1]{\left\{ {#1}\right\}}
\newcommand{\bS}{{\bf S}}
\newcommand{\bT}{{\bf T}}
\newcommand{\bRN}{{\bf RN}}
\newcommand{\R}{{\mathbb{R}}}
\newcommand{\cC}{{\mathcal C}}
\newcommand{\cF}{{\mathcal F}}
\newcommand{\cK}{{\mathcal K}}
\newcommand{\cM}{{\mathcal M}}
\newcommand{\cV}{{\mathcal V}}
\newcommand{\cW}{{\mathcal W}}
\newcommand{\cZ}{{\mathcal Z}}
\newcommand{\sP}{{\mathsf P}}
\newcommand{\sMG}{\mathsf{MG}}
\newcommand{\sPG}{\mathsf{PG}}
\newcommand{\FP}{\mathsf{FP}}
\newcommand{\FPon}{\ensuremath{\mathsf{FP}\,\mathsf{ON}}}
\newcommand{\FPoff}{\ensuremath{\mathsf{FP}\,\mathsf{OFF}}}
\newcommand{\FC}{\mathsf{FC}}
\newcommand{\mvmap}{\rightrightarrows}
\newcommand{\supp}[1]{\left| {#1}\right|}
\newcommand{\sMD}{{\mathsf{ MD}}}
\newcommand{\cl}{\mathop{\mathrm{cl}}\nolimits}
\DeclareMathOperator{\sgn}{sgn}
\definecolor{gray85}{gray}{0.85} 
\definecolor{gray8}{gray}{0.8} 
\definecolor{gray7}{gray}{0.7} 
\definecolor{gray6}{gray}{0.6} 
\definecolor{gray5}{gray}{0.5} 
\definecolor{gray4}{gray}{0.4} 
\definecolor{gray35}{gray}{0.35} 
\newcommand{\Sources}{\bS}
\newcommand{\Targets}{\bT}
\newcommand{\CPG}{\mathsf{CPG}}
\newcommand{\GPG}{\mathsf{GPG}}
\title{Combinatorial representation of parameter space for switching networks}
\author{Bree Cummins}
\address{Department of Mathematical Sciences \\
Montana State University\\
Bozeman, MT 59715}
\author{Tomas Gedeon}
\address{Department of Mathematical Sciences \\
Montana State University\\
Bozeman, MT 59715}
\author{Shaun Harker}
\address{Department of Mathematics, Hill Center-Busch Campus\\
Rutgers, The State University of New Jersey \\
Piscataway, NJ  08854-8019, USA}
\author{Konstantin Mischaikow}
\address{Department of Mathematics, Hill Center-Busch Campus\\
Rutgers, The State University of New Jersey \\
Piscataway, NJ  08854-8019, USA}
\author{Kafung Mok}
\address{Department of Mathematics, Hill Center-Busch Campus\\
Rutgers, The State University of New Jersey \\
Piscataway, NJ  08854-8019, USA}
\date{December 13, 2015}                                           
\begin{document}
\maketitle
\begin{abstract}
We describe the theoretical and computational framework for the Dynamic Signatures for Genetic Regulatory Network (DSGRN) database.
The motivation stems from urgent need to understand the global dynamics of biologically relevant signal transduction/gene regulatory networks that have at least 5 to 10 nodes, involve multiple interactions, and decades of parameters.

The input to the database computations is a regulatory network, i.e.\ a directed graph with  edges indicating up or down regulation. 
A computational model based on switching networks is generated from the regulatory network.
The phase space dimension of this model equals the number of modes and the associated parameter space consists of one parameter  for each node (a decay rate), and three parameters for each edge (low level of expression, high level of expression, and threshold at which expression levels change).
Since the nonlinearities of switching systems are piece-wise constant, there is a natural decomposition of phase space into cells from which the dynamics can be described combinatorially in terms of a state transition graph.
This in turn leads to compact representation of the global dynamics called an annotated Morse graph that identifies recurrent and nonrecurrent.
The focus of this paper is on the construction of a natural computable finite  decomposition of  parameter space into domains where the annotated Morse graph description of dynamics is constant. 

We use this decomposition to construct an SQL database that can be effectively searched for  dynamic signatures such as bistability, stable or unstable oscillations, and stable equilibria. 
We include two simple 3-node networks to provide small explicit examples of the type information stored in the DSGRN database.
To demonstrate the computational capabilities of this system we consider a simple network associated with p53 that involves 5-nodes and a 29-dimensional parameter space.
\end{abstract}

%
\section{Introduction}
Though the method presented in this paper is general, our primary motivation arises from the need to understand the global dynamics of signal transduction/gene regulatory networks, e.g.~\cite{kreeger:lauffenburger}.
Our mathematical abstraction of a regulatory network $\bRN$ is a directed graph where the nodes (vertices) $V=\setof{1,\ldots,N}$ indicate the species and the edges $E$ indicate the activation or repression of production of one species by another (this is made precise in Definition~\ref{defn:rn}).
There are at least three fundamental challenges in determining if a given regulatory network provides a biologically relevant model; determining completeness, authentication, and nonlinear interaction.
As discussed in \cite{levine:davidson} completeness and authentication are concerned with whether the relevant species are included in the regulatory network  and whether the proposed interactions are correct. 
Genomic sequencing data  informs completeness, while biochemical knowledge is required for authenticity. 
While the quantity of genomic data is rapidly increasing, detailed biochemical information is still sparse.
Since the interaction between species is typically governed by multiscale processes determining appropriate explicit nonlinearities let alone realistic physical parameters is extremely difficult.

We propose to address these challenges by employing a crude, compact, robust, mathematically rigorous, finitely presented description of dynamics that allows for a combinatorial representation of parameter space.
As a consequence, given a regulatory network $\bRN$, we are able to build the associated {\em Database of Dynamic
Signatures} which codifies the global dynamics over all of parameter space.
The underlying theoretical ideas have already been exploited to study to study a variety of mathematical and biological models \cite{arai2009database,bush2012combinatorial,bush:mischaikow,bush:cowan:harker:mischaikow}.
However, to effectively use these ideas in the context of moderately sized gene regulatory networks where completeness and authentication are in question, we require order of magnitudes greater efficiency for approximating the dynamics and the ability to work with much higher dimensional parameter spaces.
This paper describes and justifies a revised approach that achieves the desired efficiency and provides a natural decomposition of parameter space.

Our starting point has much in common with an approach often referred to as logical networks \cite{albert:collins:glass}.  
The difficulties, alluded to above, in determining and then parameterizing appropriate interactions and nonlinearities has lead to the widespread use of relatively simple models that aim to capture qualitative features of the dynamics.  
The simplest ones are the Boolean models, where each node is represented as either on or off; the dynamics of the $i$-th node consist of evaluation of a logical function defined by this binary representation of the state of the system. 
The evolution of the network proceeds in discrete steps in time which can either synchronously evaluate all functions $\Lambda_i$~\cite{albert2003topology,Chaves2006}, or do so asynchronously~\cite{Calzone2010,Tournier2009,Luo2013}.

Our approach is most closely associated with asynchronous logical networks.
Given a regulatory network $\bRN$ with $N$ nodes,  the associated {\em switching system}~\cite{Glass1972,Glass1973,Glass1978,Gouze2002,Edwards2000,Edwards2012,Edwards2014,DeJong2002} is an $N$-dimensional system of ordinary differential equations  of the form
\begin{equation}
\label{eq:switchingSystem}
\dot{x} = -\Gamma x + \Lambda(x)
\end{equation}
where $\Gamma$ is a diagonal matrix with positive entries, and $\Lambda$ is a piecewise constant function (see Definition~\ref{defn:switching}).  There are a variety of parameters associated with switching networks.
To each variable $x_i$ there is a {\em decay rate} $\gamma_i >0$ (the diagonal terms in $\Gamma$).
To each edge in $\bRN$, corresponding to the impact of species $i$ on $j$, we associate two expression levels, low $l_{j,i}$ and high $u_{j,i}$, and a threshold $\theta_{j,i}$ for $x_i$ at which the expression levels switch.
Thus given a regulatory network  the set of parameters lies in $[0,\infty)^D$ where
$D=N+ 3\cdot \#(E)$, where $N$ is the number of nodes, and $\#(E)$ is the number of edges.
Our goal is to characterize the global dynamics for every point in parameter space.

We hasten to add (this is made clear in the sections that follow) that we do not view \eqref{eq:switchingSystem} as a mathematical model for the biological process; rather \eqref{eq:switchingSystem} is only used to motivate the combinatorial computations that are the focus of this paper.
To emphasize this point we recall that the main result of \cite{gedeon:harker:kokubu:mischaikow:oka} is that applying the methods described in this paper to 2-dimensional switching systems results in a representation of the global dynamics that is valid for a much wider family of nonlinearities, i.e.\ for a system of the form
\begin{equation}
\label{eq:perturbedSystem}
\dot{x} = -\Gamma x + f(x)
\end{equation}
where $f$ is a Lipschitz continuous function. 
Algebraic topological tools such as the Conley index \cite{mischaikow:mrozek} can  be applied to this representation to extract information concerning the structure of invariant sets for dynamics of \eqref{eq:perturbedSystem}.
We also note that for a typical parameter value one can obtain explicit a priori estimates for how much the nonlinearity $f$ can differ from  the switching nonlinearity $\Lambda$.
It is in this sense that we view   \eqref{eq:switchingSystem} as  a computational tool, rather than the mathematical model of biological reality.
An important implication is that we can obtain rigorous results about the dynamics without explicit/detailed knowledge of the appropriate nonlinearity for the biological problem of interest.
It is worth contrasting our approach to more classical methods for relating the dynamics of smoothed systems \eqref{eq:perturbedSystem} with the discontinuous switching systems \eqref{eq:switchingSystem} \cite{edwards2001symbolic, ironi2011dynamics, veflingstad2007analysis}.

To approximate the dynamics of the switching system at a fixed parameter value we use the thresholds $\theta$ to decompose phase space.
These decompositions are then used to determine state transition diagrams (see Section~\ref{sec:transitionDiagram}).  
In this format the dynamics is represented by a sparse directed graph $\cF$ with roughly 
$
\prod_{n=1}^N [O(n) +1]
$
vertices where $O(n)$ is the number of out edges at node $n$.
Given the size of the regulatory networks that we are interested in analyzing, storing all the state transition graphs that arise as one sweeps through parameter space is impractical.
With this in mind we focus on the essential dynamical structures:  the {\em recurrent dynamics}, i.e.\ the nontrivial strongly connected components of $\cF$; and the {\em gradient-like dynamics}, i.e.\ the reachability, defined by paths in $\cF$, between the recurrent components.
There are efficient (both in time and memory) graph algorithms that allow one to identify strongly connected components  (see \cite{arai2009database,bush2012combinatorial} and references therein, and \cite{thieffry} for an application of these techniques in the context of regulatory networks).
Therefore, for the networks we are currently considering this step is not a computational bottleneck.
We encode this information in the form of an annotated {\em Morse graph} $\sMG(\cF)$.
The Morse graph is the minimal directed acyclic graph such that each nontrivial strongly connected component  is represented by a distinct node  and the edges indicate the reachability information inherited from $\cF$ between the nodes (see Section~\ref{subsec:dynamics}).
The annotations consist of optional information, typically problem specific, that allow for easier identification between the dynamics captured by our approach and the dynamics of biological interest.

The computational steps described above are valid for individual parameter values.
A fundamental contribution of this paper is the identification of a natural decomposition of parameter space into regions, called \emph{parameter cells}, over which the transition graphs and hence Morse graphs are constant.
The cells are given in terms of explicit polynomial inequalities in the parameter values and hence take the form of semi-algebraic sets.
Since the global dynamics of switching networks is parameter dependent and we are working with high dimensional parameter spaces it should come as no surprise that understanding the geometry and organizing the structure of all parameter cells is nontrivial.
With this in mind we introduce the {\em parameter graph} (see Section~\ref{sec:parameterGraph}), an undirected graph where each node is identified with a parameter cell and the edges provide information about how the parameter cells are related.
In fact, we make use of two parameter graphs. 
The first, called the \emph{geometric parameter graph} ($\GPG$)   is based on the topology of parameter space as a subset of $[0,\infty)^D$.
This provides a description of the decomposition of parameter space in a language familiar to researchers in the field of dynamical systems. 
The second, called the \emph{combinatorial parameter graph} ($\CPG$) is what is actually computed. 
We prove that there is a graph homomorphism $h\colon \GPG\to \CPG$ and conjecture that the geometric and combinatorial parameter graphs are equivalent, but only have a proof, see Theorem~\ref{thm:isomorphism}, for regulatory networks whose nodes have at most 3 in edges and out edges. 

Our construction of the $\CPG$ for $\bRN$ is based on two facts. First, that the graph structure is actually a canonical graph product over factor graphs $\CPG_n$ which depend only on the local structure of $\bRN$ around each network node $n$. Second, that each $\CPG_n$ is a connected subgraph of a larger graph of combinatorial parameters, and may be constructed via a graph traversal search for \emph{realizable} combinatorial parameters, i.e.\ those that are realized by some geometric parameter of $\bRN$. 
In particular, for each node $n$ in $\bRN$ we compute the set of possible combinatorial parameters, which is determined by the number of out edges at $n$, the number of in edges at $n$, and the logic that determines how the information from the in edges is processed.
To identify whether a particular combinatorial parameter is realizable we make use of cylindrical algebraic decompositions (CAD) \cite{collins1998partial} to determine if there exists a solution to an associated the set of inequalities.
Recall that  a cylindrical algebraic decomposition of a semi-algebraic set is a recursive set of inequalities that defines the elements of the set.
As is discussed in Remark~\ref{rem:CAD}, CAD computations are expensive. 
However, once done they can be re-used at a constant cost. 
Thus our strategy is to perform the CAD computations and store them.
A list of the node structures for which CAD computations have been performed is given in Table~\ref{table:goodnodes}.

Our inability to prove that $h\colon \GPG\to \CPG$ is always an isomorphism, stems from our  lack of general understanding of the geometry of individual and pairs of regions on which transition graphs are constant.
However, the CAD computations, once performed, provide sufficient information to check the conjecture.
This is the essence of the proof of Theorem~\ref{thm:isomorphism}.

The concepts and techniques introduced in this paper have allowed us to develop the $\mathsf{DSGRN}$ (\emph{Dynamic Signatures of Genetic Regulatory Networks})  software \cite{dsgrn} that has the following features and capabilities:
\begin{itemize}
\item It can compute the size of parameter graphs (number of parameter nodes) for any regulatory network constructed using components found in Table~\ref{table:goodnodes}. In particular we supply a web-based program to design such networks which automatically tabulates the size of the parameter graph.
\item It can access a database of cylindrical algebraic decompositions (CAD) of parameter cells corresponding to parameter nodes for regulatory networks constructed using components found in Table~\ref{table:goodnodes}.
\item It can compute annotated Morse graphs given a parameter (cell) and a regulatory network. 
\item It can compute databases of annotated Morse graphs over an entire (combinatorial) parameter graph given a regulatory network. These databases are designed using SQL and support a range of queries over Morse graph attributes and annotations.
\item We supply a web-interface to interact with databases that can filter parameter graphs to show only nodes which satisfy certain queries.
\item We supply a command line interface which allows  access to phase space information for the associated switching system of a regulatory network given a particular parameter of interest.
\item We have supplied documentation of the program along with tutorial materials.
\end{itemize}

To provide the reader with intuition concerning the output in Section~\ref{sec:applications} we consider three regulatory networks.
The first two, the repressilator and the bistable repressilator, consist of 3 nodes and 3 and 4 edges respectively.
For these examples the parameter graphs are sufficiently small that they can be easily visualized.
In general, the output of $\mathsf{DSGRN}$ grows rapidly as a function of the size of the regulatory network and thus can only be accessed efficiently through queries.
To give a sense of the computational capabilities of $\mathsf{DSGRN}$ for problems of biological interest we consider a subnetwork associated with p53 and report the computational times and costs.
The parameter graph information can be accessed at \cite{dsgrn}.

Before concluding this introduction we return to the challenge of determining completeness and authentication where we believe  $\mathsf{DSGRN}$ can be a useful tool.
These challenges imply that in early stages of modeling one cannot  necessarily assume that a proposed regulatory network is `correct.'
By allowing one to compare the output of the model dynamics against experimental data, Boolean models provide a computationally tractable means to attempt to identify the existence of missing species and interaction and/or to exclude non existent interactions and unnecessary species (see for example~\cite{chaves2005,chaves2008,Grieco2013,Niarakis2014}).
Because the computations that we  perform to identify the dynamics is purely combinatorial, the cost of our computations are similar to that of pure Boolean models.
However, because we model using real numbers and ordinary differential equations,  $\mathsf{DSGRN}$  can capture finer dynamical structures that presumably can be more readily identified with experimental values.
More significantly, the fact that $\mathsf{DSGRN}$ provides a complete description of the dynamics over all of parameter space opens up new opportunities for deciding upon the plausibility of or comparison of different  models, e.g.\ how stable is the desired dynamic phenotype to changes in parameters, and for control of the dynamics, e.g.\  which changes in parameters result in a desired dynamic phenotype.
We leave the implementation of these ideas to future works.

%

\section{Switching Networks}
\label{sec:switching}

We review the essential concepts of switching networks. 
In Section~\ref{subsec:networks} we define regulatory networks  and their associated switching systems. 
In Section~\ref{subsec:logic} we discuss the interpretation of the nonlinearities in the switching system as performing logical operations on the inputs. 
In Section~\ref{subsec:cells} we use the discontinuities of the nonlinearity of \eqref{eq:switchingSystem} to impose a natural decomposition of phase space.

\subsection{Regulatory Networks} \label{subsec:networks}
\begin{defn}
\label{defn:rn}
A {\em regulatory network} $\bRN=(V,E)$ is an annotated finite directed graph with vertices $V = \setof{1,\ldots,N}$ called \emph{network nodes} and annotated directed edges $E \subset V\times V \times \{ \to, \dashv \} $ called \emph{interactions}. 
An $\to$ annotated edge is referred to as an {\em activation} and an $\dashv$ annotated edge is called a {\em repression}.
We indicate that either $i \to j$ or $i \dashv j$ without specifying which by writing $(i, j) \in E$. 
An $\to$ annotated edge is referred to as an {\em activation} and an $\dashv$ annotated edge is called a {\em repression}. 
We allow for self edges, but admit at most one edge between any two nodes, e.g.\ we cannot have both $i \to j$ and $i \dashv j$ simultaneously. 
The set of {\em sources} and {\em targets} of a node $n$ are denoted by 
\[
\Sources(n) :=\setof{i \mid (i, n) \in E }\quad\text{and}\quad \Targets(n) := \setof{j \mid (n, j) \in E }.
\] 
The cardinality of $\Sources(n)$ and $\Targets(n)$ are denoted by $\#(\Sources(n))$ and $\#(\Targets(n))$. 
Each node is equipped with a nonlinear function $M_i : \R^{\Sources(i)} \to \R$, called the \emph{logic of  node $i$}. 
\end{defn} 

For convenience we abuse notation and occasionally write a network node as $x_i$ instead of $i$. For instance, we may write $(i, j, \to) \in E$ and $(i, j, \dashv) \in E$ respectively as $x_i \to x_j$ and $x_i \dashv x_j$.

\begin{rem} 
\label{rem:noBlack}
\emph{Throughout} this paper we assume that the regulatory network $\bRN$ does not have any direct negative self-regulation $i \dashv i$ for any $i$. This is done for technical reasons related to the code (see Remark~\ref{rem:noBlack2}).
This is not a serious restriction. 
In biological systems negative self-regulation is often mediated by an intermediary, e.g.\ $x_i \to X_i \dashv x_i$~\cite{edwards2015,cummins2015}. 
Furthermore, future planned developments of the code will allow the user to remove this restriction.
\end{rem}

\begin{defn} \label{def:paramspace} Given a regulatory network $\bRN = (V,E)$, for each edge $(i,j) \in E$ (i.e. $i \rightarrow j$ or $i \dashv j)$ we associate three parameters: $l_{j,i}$, $u_{j,i}$, and $\theta_{j,i}$. (Note the matrix-style subscript order convention.) Additionally, to each  node $i\in V$ we associate a decay rate $\gamma_i$. Each of these parameters are real numbers, so we may regard the collection of all these parameters as a tuple $(l, u, \theta, \gamma) \in \R^{D}$. 
We call this collection of numbers a \emph{parameter} for  $\bRN$.
\end{defn}

\begin{defn}
\label{defn:switching}
Given a regulatory network $\bRN$  \emph{the associated switching system at parameter $(l, u, \theta, \gamma) \in \R^D$}, where $D=N+3\cdot\#(E)$, is given by
\begin{equation}
\label{eq:switching}
\dot{x}_j = -\gamma_j x_j + \Lambda_j(x),\quad j=1,\ldots, N
\end{equation}
where
\begin{equation} \label{eq:lambdaAsComposition}
\Lambda_j := M_j \circ \sigma_j. 
\end{equation}
Here $\sigma_j : \R^N \rightarrow \R^{\Sources(j)}$ is a multi-dimensional step function  defined componentwise (i.e.\ by its coordinate projections $\pi_i(\sigma_j)$) for each $i \in \Sources(j)$ as
\begin{equation}
\label{eq:step_functions}
\sigma_{j,i} = \pi_i(\sigma_{j}(x)) := \begin{cases}
l_{j,i} & \text{if $x_i \to x_j$ and $x_i < \theta_{j,i}$ or if $x_i \dashv x_j$ and $x_i > \theta_{j,i}$ } \\
u_{j,i} & \text{if $x_i \to x_j$ and $x_i > \theta_{j,i}$ or if $x_i \dashv x_j$ and $x_i < \theta_{j,i}$ } \\
\text{undefined} & \text{otherwise.}
\end{cases}
\end{equation}
\end{defn}

\begin{rem}
Switching systems written in the form \eqref{eq:switching} are equivalent to \eqref{eq:switchingSystem} where $\Gamma$ is the diagonal matrix with diagonal entries $\Gamma_{ii} := \gamma_i$ and the $i$-th coordinate of $\Lambda$ is $\Lambda_i$.
\end{rem}

The dependence on $x$ of the right-hand-side of \eqref{eq:switching} involves the expression $\sigma_j(x)$. Since $\sigma_j$ is a multidimensional step function which compares variables to thresholds, a grid-like structure is imposed upon phase space.

\begin{defn} 
Let $z = (u,l,\theta,\gamma)$ be a parameter for the regulatory network \bRN. For each $i \in V$, observe the convention that $\theta_{-\infty,i} = 0$ and $\theta_{\infty,i} = \infty$. For all $i\in V$, $j_1$, $j_2 \in V \cup \{-\infty, \infty\}$, we say $\theta_{j_1, i}$ and $\theta_{j_2, i}$ are \emph{consecutive thresholds} if $\theta_{j_1, i} < \theta_{j_2, i}$ and there does not exist $\theta_{j, i}$ such that $\theta_{j_1, i} < \theta_{j, i} < \theta_{j_2, i}$. 
For each $i = 1, 2, \cdots, N$, suppose $\theta_{a_i, i}$ and $\theta_{b_i, i}$ are consecutive thresholds. Then we say that the product of open intervals
\[ \kappa := \prod_{i=1}^N (\theta_{a_i, i}, \theta_{b_i, i}) \] is a \emph{fundamental cell} of the regulatory network. We denote the collection of fundamental cells as $\cK(z)$. If $a_j \in V$ then we say that the bounded hyperplane
\[ \kappa^-_j := \prod_{i=1}^{j-1} (\theta_{a_i, i},\theta_{b_i, i}) \times \{\theta_{a_j, j}\} \times \prod_{i=j+1}^{N} (\theta_{a_i, i},\theta_{b_i, i}) \] is a \emph{left face} of $\kappa$ with projection index $j$ and switching index $a_j$. Similarly, if $b_j \in V$ then we say the bounded hyperplane \[ \kappa^+_j := \prod_{i=1}^{j-1} (\theta_{a_i, i},\theta_{b_i, i}) \times \{\theta_{b_j, j}\} \times \prod_{i=j+1}^{N} (\theta_{a_i, i},\theta_{b_i, i}) \] is a \emph{right face} of $\kappa$ with projection index $j$ and switching index $b_j$. A \emph{face} of a fundamental cell $\kappa$ is either a left or right face of $\kappa$. 
\end{defn}

We restrict our focus to non-negative parameters $\bar{Z} \subset [0,\infty)^{D}\subset \R^{D}$   for which we interpret the $l$ and $u$ values as the \emph{lower} and \emph{upper} values that may be taken, respectively. 
This  imposes the additional requirement that $l_{j,i} \leq u_{j,i}$. 

\begin{defn}\label{def:regular_param}
Given a switching network $\bRN$ the associated \emph{parameter space $\bar{Z}$} is defined to be the collection of all parameters $(l,u,\theta,\gamma) \in [0,\infty)^{D}$ for which $l_{j,i} \leq u_{j,i}$ for all $(i,j)\in E$. 
A parameter $z = (l,u,\theta,\gamma) \in \bar{Z}$ is  \emph{regular} if 
\begin{enumerate}
\item the inequalities are satisfied strictly, i.e.\ $0 < l_{j,i} < u_{j,i}$, $0 < \gamma_i$, and $0 < \theta_{j,i}$,
\item for each fixed $i$ the threshold values $\theta_{j,i}$ are distinct, and
\item for each $\kappa \in\cK(z)$ the value  $ \Lambda_i (\kappa) \not = \gamma_i \theta_{j,i}$ for each threshold $\theta_{j,i}$ that defines $\kappa $.
\end{enumerate}
We denote the collection of regular parameters by $Z$. 
Notice that  $\bar{Z}$ is (as the notation suggests) the topological closure of $Z$ in $\R^{D}$ and the set $Z$ is generic in $\bar{Z}$. 
For a regular parameter $z\in Z$ the thresholds $\{ \theta_{j,i} : j \in \Targets(i)\}$ occur in some definite (total) order for each $i \in V$. 
We denote this ordering by $O(z)$ and the collection of all  orderings over all of parameter space as 
\[
O(Z) := \bigcup_{z\in Z} O(z). 
\]
\end{defn}

\subsection{Network Node Logics} \label{subsec:logic}

Definition \ref{defn:switching} does not specify the nonlinear functions $M_j$. 
As indicated in the introduction our approach is associated with the interpretation of regulatory networks as logical networks. 
To be more precise, a logical expression involving truth variables $v_i$, logical conjunctives $\wedge$ (i.e. ANDs), and logical disjunctives $\vee$ (i.e. ORs) leads to an analogous arithmetic expression by replacing $\wedge$ with $\cdot$ and $\vee$ with $+$. 
For example, $(a \vee b) \wedge c$ becomes $(a + b)c$. 
Observe that given truth variables $v_i$, a logical expression $\ell(v_1, v_2, \cdots, v_n)$ (without negations) leads unambiguously to the multilinear arithmetic expression $M(x_1,x_2,\cdots,x_n)$ given by
\[ 
M(x_1,x_2,\cdots,x_n) := \sum_{ \ell(v_1, \cdots, v_n) = T } \prod_{v_i = F} (1-x_i) \prod_{v_i = T} x_i, 
\]
where $x_i\in\R$.

Note that for every logical expression where each variable occurs \emph{at most once} the recipe of replacing $\wedge$ with $\cdot$ and $\vee$ with $+$  produces an arithmetic expression which is equivalent to this multilinear expression.

For the purposes of this paper our focus is on regulatory networks where one considers a \emph{logic} for each network node $j$ consisting of a logical expression involving each of the variables $x_i \in \Sources(j)$ precisely once. 
The multilinear functions $M_j$ appearing in \eqref{eq:lambdaAsComposition} are obtained from these logical expressions.

\begin{rem}
\label{rem:monomial}
These assumptions on the structure of the terms in $M_j$ imply that $\Lambda_j$ can always be expressed in the form of a sum of monomials involving the step functions $\sigma_{j,i}(x)$ where the degree of $\sigma_{j,i}(x)$ is either zero or one.
\end{rem}

\subsection{Fundamental Cells and Vector Fields} \label{subsec:cells}

If we restrict to a fundamental cell then \eqref{eq:switching} reduces to a much simpler linear form. In particular, observe that given a fundamental cell $\kappa$, if $x,x'\in \kappa$ then $\Lambda(x) = \Lambda(x')$, and therefore $\Lambda(\kappa)$ is well-defined. In accordance with this observation we make the following definitions:

\begin{defn}
A regulatory network $\bRN$ and a choice of parameter values $z\in Z$ leads to a uniquely defined  switching system \eqref{eq:switchingSystem} and set of fundamental cells $\cK(z)$.
The \emph{$\kappa$-cell vector field} for a fundamental cell $\kappa\in \cK(z)$ is given by
\begin{equation}\label{eq:kappa}
f^{\kappa}(x) := -\Gamma x + \Lambda(\kappa).
\end{equation} 
We denote the flow generated by \eqref{eq:kappa} by $\psi_\kappa$.
Observe that
\[
\Phi(\kappa):= \Gamma^{-1}\Lambda(\kappa)
\]
is a global attracting fixed point for \eqref{eq:kappa}. 
Accordingly we say that a fundamental cell $\kappa$ is an \emph{attracting cell} if
\[
\Gamma^{-1}\Lambda(\kappa) \in \kappa.
\]
\end{defn}

%
\section{State Transition Diagrams}
\label{sec:transitionDiagram}

As indicated in the introduction we capture the dynamics of \eqref{eq:switching} via  \emph{state transition diagrams}, directed graphs where the vertices correspond to regions of phase space, and the edges indicate how regions are related by the dynamics.
We begin in Section \ref{subsec:wall} by defining wall-labelings that encapsulate combinatorial information derived from the $\kappa$-cell vector fields \eqref{eq:kappa}.
In Section \ref{subsec:diagrams} we give three different constructions of state transition diagrams. 
In Section \ref{subsec:dynamics} we show that these three constructions are equivalent in the sense that they lead to equivalent dynamical information.

\subsection{Wall labelings} \label{subsec:wall}

Faces of fundamental cells play a key role in our combinatorial representation of the dynamical system \eqref{eq:switching}. However each such face has two adjacent fundamental cells and the $\kappa$-cell vector fields on either side may differ. Accordingly we refine our concept of face to make reference to one of the adjacent fundamental cells.

\begin{defn} 
A \emph{wall} is a pair $(\tau, \kappa)$ where $\kappa$ is a fundamental cell and $\tau$ is a face of $\kappa$.
Each wall inherits the projection and switching indexes from the corresponding face $\tau$ of $\kappa$. We say the \emph{sign of the wall $(\tau, \kappa)$ is $1$} (and write $\sgn (\tau,\kappa) = 1$) if $\tau$ is a left face of $\kappa$ and we say the sign of the wall is $-1$ if $\tau$ is a right face of $\kappa$ (and write $\sgn (\tau,\kappa) = -1$). For a fixed parameter value $z$ we denote the collection of walls by $\cW(z)$. 
\end{defn}

Observe that given a wall $(\tau, \kappa)$  there are three possibilities with respect to the  $\kappa$-cell vector field $f^\kappa$: it is everywhere tangential to $\tau$;  it points out of $\kappa$ everywhere on $\tau$; or it points into $\kappa$ everywhere on $\tau$. 
If the projection index of $\tau$ is $i$, then these three cases are determined by the sign of the expression $f^{\kappa}_i(\tau)$ and whether the wall corresponds to a left or right face (i.e. the sign of the wall) which in turn can be determined as a function of parameters. We summarize this in the following definition.

\begin{defn} 
Consider a switching network at a parameter value $z\in Z$. The  \emph{wall-labeling} of $\cW(z)$ is the function $\ell : \cW(z) \rightarrow \setof{-1, 0, 1}$ defined as follows.
Let $(\tau,\kappa)\in\cW(z)$ have projection index $i$ and switching index $j$. Then define
\begin{equation} 
\label{eqn:sign}
 \ell((\tau, \kappa)) := \sgn (\tau, \kappa) \cdot \sgn(f^\kappa_i(\tau)) = \sgn (\tau, \kappa) \cdot \sgn\left( - \gamma_i \theta_{j, i} + \Lambda_i(\kappa)\right).
\end{equation}
Note the last equality follows since if $x \in \tau$ then $x_i = \theta_{j,i}$.
\end{defn}

\begin{rem}
\label{rem:wall}
As \eqref{eqn:sign} makes clear the wall-labeling function depends explicitly on the choice of parameters for the switching network. 
However, given any two parameter values for which the ordering of the thresholds is the same there is an obvious identification of the fundamental cells and walls.
Using this identification, the wall labelling is completely determined by the values of $\sgn(f^\kappa_i(\tau))$ over the collection of fundamental cells.
As such  we can define equivalence classes of parameter values over which 
wall-labelings are constant.
\end{rem}

\begin{defn}
A wall $(\tau, \kappa)$ is an \emph{absorbing wall} if $\ell(\tau, \kappa) =-1$, an \emph{entrance wall} if $\ell(\tau, \kappa) = 1$, and a \emph{tangential wall} if $\ell(\tau, \kappa) =0$.
\end{defn}

For parameters in the set $Z$ (i.e. regular parameters) we will have only absorbing and entrance walls.

\begin{prop} 
Given a switching system with parameter $z\in Z$, there are no tangential walls. That is, the wall-labeling function $\ell$ satisfies
\[
\ell(\tau,\kappa) \neq 0
\]
for all $(\tau,\kappa)\in\cW(z)$.
\end{prop}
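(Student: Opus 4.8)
The plan is to reduce the claim directly to condition~(3) of the definition of a regular parameter (Definition~\ref{def:regular_param}), after matching up the two indexing conventions carefully. The crucial structural observation is that in the formula \eqref{eqn:sign} the factor $\sgn(\tau,\kappa)$ is always $\pm 1$, hence never zero, so the vanishing of $\ell(\tau,\kappa)$ is controlled entirely by the second factor.

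First I would unwind \eqref{eqn:sign}. Fix a wall $(\tau,\kappa)\in\cW(z)$ with projection index $i$ and switching index $j$. Since $\tau$ is either a left or a right face of $\kappa$, by definition $\sgn(\tau,\kappa)\in\{-1,+1\}$, and therefore
\[
\ell(\tau,\kappa)=0 \iff \sgn\!\left(-\gamma_i\theta_{j,i}+\Lambda_i(\kappa)\right)=0 \iff \Lambda_i(\kappa)=\gamma_i\theta_{j,i}.
\]
Thus the proposition is equivalent to the assertion that $\Lambda_i(\kappa)\neq\gamma_i\theta_{j,i}$ for every wall $(\tau,\kappa)$ when $z\in Z$.

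Next I would check that $\theta_{j,i}$ is genuinely one of the thresholds that \emph{defines} $\kappa$, so that condition~(3) applies verbatim. Writing $\kappa=\prod_{i=1}^N(\theta_{a_i,i},\theta_{b_i,i})$, a left face in coordinate $i$ fixes the coordinate at $\theta_{a_i,i}$ and carries switching index $a_i$, while a right face fixes it at $\theta_{b_i,i}$ and carries switching index $b_i$. In either case the switching index $j$ of the wall equals $a_i$ or $b_i$, so $\theta_{j,i}\in\{\theta_{a_i,i},\theta_{b_i,i}\}$ is precisely one of the two thresholds bounding $\kappa$ in the $i$-th coordinate. Moreover, since a left face is only declared when $a_i\in V$ and a right face only when $b_i\in V$, the switching index of any wall lies in $V$, so $\theta_{j,i}$ is a true threshold rather than one of the formal endpoints $\theta_{-\infty,i}=0$ or $\theta_{\infty,i}=\infty$. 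Condition~(3) of Definition~\ref{def:regular_param} then states exactly that $\Lambda_i(\kappa)\neq\gamma_i\theta_{j,i}$, and combining this with the equivalence of the previous step yields $\ell(\tau,\kappa)\neq 0$.

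I expect no substantive obstacle here: the statement is essentially a repackaging of the genericity condition~(3), which was evidently formulated precisely to rule out tangential walls. The only step demanding genuine care is the index bookkeeping, namely distinguishing the projection index $i$ (naming the coordinate direction the face is perpendicular to) from the switching index $j$ (naming the node whose threshold $\theta_{j,i}$ the face sits on), and confirming that this $\theta_{j,i}$ is one of the defining thresholds of $\kappa$ so that condition~(3) can be invoked without modification.
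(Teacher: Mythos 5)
Your proposal is correct and is exactly the argument the paper intends: the paper states this proposition without proof because it is an immediate consequence of condition~(3) of Definition~\ref{def:regular_param}, and your reduction via the nonvanishing of $\sgn(\tau,\kappa)$ in \eqref{eqn:sign} is the natural way to make that precise. Your additional bookkeeping check that the switching index $j$ lies in $V$ and that $\theta_{j,i}$ is one of the thresholds defining $\kappa$ is the only nontrivial detail, and you have handled it correctly.
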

 
The classification of walls according to the value of the wall-labeling function arises from geometric considerations of the flows on the fundamental cells. We leave the proof of the following to the reader: 

\begin{prop} 
\label{prop:att=l}
A fundamental cell $\kappa$ is attracting if and only if every wall $(\tau,\kappa)\in\cW$ is an entrance wall. 
\end{prop}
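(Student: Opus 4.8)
The plan is to reduce both the attracting condition and the all-entrance-walls condition to the same coordinatewise system of strict inequalities and match them term by term. Write $\Phi(\kappa) = \Gamma^{-1}\Lambda(\kappa)$ for the global attracting fixed point of $f^\kappa$, whose $i$-th coordinate is $\Phi(\kappa)_i = \Lambda_i(\kappa)/\gamma_i$. Since $\kappa = \prod_{i=1}^N(\theta_{a_i,i},\theta_{b_i,i})$, the membership $\Gamma^{-1}\Lambda(\kappa)\in\kappa$ is, by definition of membership in a product of open intervals, exactly the assertion that
\[
\theta_{a_i,i} < \frac{\Lambda_i(\kappa)}{\gamma_i} < \theta_{b_i,i}\qquad\text{for every } i=1,\ldots,N,
\]
and, multiplying through by $\gamma_i>0$ (valid since $z\in Z$), this is equivalent to requiring $-\gamma_i\theta_{a_i,i}+\Lambda_i(\kappa)>0$ together with $-\gamma_i\theta_{b_i,i}+\Lambda_i(\kappa)<0$, for each $i$.

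I would then read off the wall-labeling of the two kinds of faces from \eqref{eqn:sign}. A left face $\kappa^-_j$ has switching index $a_j$ and $\sgn(\kappa^-_j,\kappa)=+1$, so it is an entrance wall exactly when $\sgn(-\gamma_j\theta_{a_j,j}+\Lambda_j(\kappa))=+1$, i.e.\ exactly when the lower inequality in coordinate $j$ holds. A right face $\kappa^+_j$ has switching index $b_j$ and $\sgn(\kappa^+_j,\kappa)=-1$, so it is an entrance wall exactly when $\sgn(-\gamma_j\theta_{b_j,j}+\Lambda_j(\kappa))=-1$, i.e.\ exactly when the upper inequality in coordinate $j$ holds. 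Thus each present face being an entrance wall is equivalent to the corresponding one of the two inequalities above.

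The one genuine subtlety is coordinates in which a face is absent. By the conventions $\theta_{-\infty,j}=0$ and $\theta_{\infty,j}=\infty$, the lowest cell in the $j$-direction has $a_j=-\infty$ and hence no left face (no wall), while the highest has $b_j=\infty$ and hence no right face. Here I would invoke positivity of $\Lambda_j(\kappa)$: by Remark~\ref{rem:monomial} the quantity $\Lambda_j(\kappa)$ is a sum of monomials in the step-function values $\sigma_{j,i}(\kappa)\in\{l_{j,i},u_{j,i}\}$, each of which is positive for a regular parameter, so $\Lambda_j(\kappa)>0$. Consequently the lower inequality $\Lambda_j(\kappa)/\gamma_j>0=\theta_{-\infty,j}$ and the upper inequality $\Lambda_j(\kappa)/\gamma_j<\infty=\theta_{\infty,j}$ hold automatically precisely when the corresponding face is missing.

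With these pieces in place both implications follow. If $\kappa$ is attracting, then all the coordinatewise inequalities hold, so every present face is an entrance wall. Conversely, if every wall is an entrance wall, then each present face supplies its inequality, while the positivity argument supplies the inequalities for the missing faces; hence all of the inequalities hold and $\Gamma^{-1}\Lambda(\kappa)\in\kappa$. I expect the only thing requiring care to be this boundary bookkeeping: ensuring that an absent wall imposes no constraint yet its membership inequality is still guaranteed by positivity of $\Lambda_j(\kappa)$.
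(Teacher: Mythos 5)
Your proof is correct, and since the paper explicitly leaves this proposition to the reader there is no written proof to diverge from: your argument --- unwinding the wall-labeling \eqref{eqn:sign} coordinate by coordinate for left and right faces and matching the resulting strict inequalities against the membership $\Gamma^{-1}\Lambda(\kappa)\in\kappa$ --- is precisely the intended one. You also correctly isolate the only delicate point, namely the directions with $a_j=-\infty$ or $b_j=\infty$ where no wall exists, and dispose of it by positivity of $\Lambda_j(\kappa)$ (by Remark~\ref{rem:monomial} a sum of monomials in the positive values $l_{j,i}, u_{j,i}$ at a regular parameter) together with finiteness.
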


A stronger relation between the labeling of walls and the dynamics of the $\kappa$-equation is as follows. Again the proof is left to the reader.

\begin{prop}
Let $x\in\kappa\in\cK(z)$ where $z\in Z$ is a regular parameter value. Recall that $\psi_\kappa$ is the flow generated by~\eqref{eq:kappa}.
If $\kappa$ is an attracting cell, then there exists a unique time $t^-_x<0$ such that $\psi_\kappa((t^-_x,\infty),x)\subset \kappa$ and $\psi_\kappa(t^-_x,x)\in \bar{\tau}$ where $(\tau,\kappa)$ is an entrance wall and $\bar{\tau}$ denotes closure of $\tau$.
If $\kappa$ is not an attracting cell, then there exist unique times $t^-_x<0<t^+_x$ such that $\psi_\kappa((t^-_x,t^+_x),x)\subset \kappa$,  $\psi_\kappa(t^-_x,x)\in \bar{\tau}$ where $(\tau,\kappa)$ is an entrance wall, and $\psi_\kappa(t^+_x,x)\in \bar{\tau}'$ where $(\tau',\kappa)$ is an absorbing wall.
\end{prop}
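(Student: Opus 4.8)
The plan is to exploit the fact that $\Gamma$ is diagonal with positive entries, so that the $\kappa$-cell vector field \eqref{eq:kappa} decouples into $N$ independent scalar linear equations. First I would write down the explicit solution in each coordinate,
\[
(\psi_\kappa(t,x))_i = \Phi_i(\kappa) + \bigl(x_i - \Phi_i(\kappa)\bigr)e^{-\gamma_i t},\qquad \Phi_i(\kappa) = \frac{\Lambda_i(\kappa)}{\gamma_i},
\]
where $\Phi(\kappa) = \Gamma^{-1}\Lambda(\kappa)$ is the global attracting fixed point. Each coordinate is therefore strictly monotone in $t$ whenever $x_i \neq \Phi_i(\kappa)$, approaching $\Phi_i(\kappa)$ as $t\to+\infty$ and diverging to $\pm\infty$ as $t\to-\infty$. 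Consequently the set $I_i := \{t : (\psi_\kappa(t,x))_i \in (\theta_{a_i,i},\theta_{b_i,i})\}$ is an open interval containing $0$, and the dwell time of the trajectory in the product cell $\kappa$ is the open interval $\bigcap_{i=1}^N I_i$, again containing $0$. I would set $t^-_x$ and $t^+_x$ to be its endpoints; uniqueness of $t^\pm_x$ is then immediate from this interval description together with monotonicity, which disposes of the uniqueness clauses in both cases.

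Next I would separate the two cases using the regularity hypothesis. Regularity condition (3) guarantees $\Phi_i(\kappa) \neq \theta_{j,i}$ for every threshold defining $\kappa$, so each coordinate falls into exactly one of three types: (A) $\Phi_i(\kappa)$ lies strictly inside $(\theta_{a_i,i},\theta_{b_i,i})$, in which case $I_i$ is of the form $(c_i,\infty)$; or (B)/(C) $\Phi_i(\kappa)$ lies strictly below $\theta_{a_i,i}$ or strictly above $\theta_{b_i,i}$, in which case $I_i$ is a bounded interval $(c_i,d_i)$. By the definition of an attracting cell, $\kappa$ is attracting exactly when $\Phi(\kappa)\in\kappa$, i.e.\ when every coordinate is of type (A), i.e.\ when $t^+_x = +\infty$; otherwise at least one coordinate is of type (B)/(C) and $t^+_x$ is finite. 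This yields the dichotomy, giving $\psi_\kappa((t^-_x,\infty),x)\subset\kappa$ in the attracting case and $\psi_\kappa((t^-_x,t^+_x),x)\subset\kappa$ otherwise. Here I would note the trivial degenerate exclusion $x=\Phi(\kappa)$, for which the trajectory is constant and no finite $t^-_x$ exists.

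It then remains to identify the wall types at the exit times. The exit point $\psi_\kappa(t^-_x,x)$ lies on $\partial\kappa$ and belongs to $\bar\tau$ for each face $\tau$ whose defining coordinate $i$ realizes the backward exit time; there may be several if the trajectory reaches a lower-dimensional corner, and the use of the closure $\bar\tau$ accommodates this. For any such coordinate, as $t$ increases through $t^-_x$ the trajectory moves from the threshold into $\kappa$, so $f^\kappa_i$ points into $\kappa$ there. Equivalently, substituting into \eqref{eqn:sign} and using $-\gamma_i\theta_{j,i}+\Lambda_i(\kappa) = \gamma_i(\Phi_i(\kappa)-\theta_{j,i})$ together with $\sgn(\tau,\kappa)=+1$ for a left face and $-1$ for a right face, one checks in each of the types (A)/(B)/(C) that $\ell(\tau,\kappa)=+1$, so $(\tau,\kappa)$ is an entrance wall. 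The symmetric computation at $t^+_x$, when finite, shows that every coordinate realizing the forward exit is necessarily of type (B)/(C) — type (A) coordinates never exit forward — and gives $\ell(\tau',\kappa)=-1$, so $(\tau',\kappa)$ is an absorbing wall. This completes both cases.

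The main obstacle I anticipate is purely the sign bookkeeping in this last step: correctly pairing the direction of crossing, through the lower threshold $\theta_{a_i,i}$ versus the upper threshold $\theta_{b_i,i}$, and the direction of time, forward versus backward exit, with the wall sign $\sgn(\tau,\kappa)$ and the sign of $\Phi_i(\kappa)-\theta_{j,i}$. Organizing this as a short case table over the three coordinate types keeps it manageable, and the absence of tangential walls for regular parameters guarantees every relevant sign is nonzero, so that all crossings are transversal and the exit times are genuinely attained.
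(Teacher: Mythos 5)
The paper never actually writes out a proof of this proposition (it is explicitly ``left to the reader''), so the comparison is with the argument the authors evidently intend, and yours is precisely that argument: decouple the diagonal linear field coordinatewise, use strict monotone convergence to $\Phi(\kappa)=\Gamma^{-1}\Lambda(\kappa)$ to show the dwell set $\bigcap_{i=1}^N I_i$ is an open interval containing $0$ (which settles uniqueness), and read the wall labels off $\sgn\bigl(\gamma_i(\Phi_i(\kappa)-\theta_{j,i})\bigr)$ via \eqref{eqn:sign}, with regularity condition (3) excluding tangencies. Your forward-time analysis is complete and correct: since $0<\Phi_i(\kappa)<\infty$, a coordinate of type (B)/(C) has its fixed-point value strictly beyond a \emph{finite} threshold, so the forward exit always occurs through a genuine face, your sign bookkeeping correctly yields an absorbing wall there, and type (A) coordinates never exit forward, giving the attracting/non-attracting dichotomy.

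There is, however, a gap in the backward-time claims, concentrated exactly where your interval classification is stated too strongly. You assert that type (A) gives $I_i=(c_i,\infty)$ with finite $c_i$ and that type (B)/(C) gives a bounded interval $(c_i,d_i)$; both claims silently assume that the defining thresholds of $\kappa$ in coordinate $i$ are genuine (finite, with switching index in $V$). Cells with $a_i=-\infty$ or $b_i=\infty$ break this. In the one-node example with $\kappa_2=(\theta,\infty)$ and $\gamma\theta<l<u$ (so $\kappa_2$ is attracting), any $x>\Phi(\kappa_2)$ has backward orbit diverging to $+\infty$ inside $\kappa_2$, so no finite $t^-_x$ exists; and with $\kappa_1=(0,\theta)$ and $\gamma\theta<l$, the backward orbit exits at finite time through $\{x=0\}$, whose switching index $-\infty\notin V$ means it is not a face, so there is no entrance wall $\tau$ at the exit point. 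Your ``degenerate exclusion $x=\Phi(\kappa)$'' catches only one instance of this phenomenon. To be fair, these failures are defects of the proposition as literally stated, which is correct only for cells bounded by genuine thresholds on the relevant sides; but since you claim to prove both cases in full, you should either restrict to such cells or weaken the backward assertion to: if the backward orbit leaves $\kappa$ at a finite time, it does so through the closure of an entrance wall. With that emendation, your argument goes through.
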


\subsection{State Transition Diagram Constructions} \label{subsec:diagrams}

Recall that a state transition diagram is a directed graph. 
To emphasize that we employ this as a means of representing information about dynamics we adopt an equivalent perspective: a state transition diagram is a combinatorial multivalued map $\cF\colon \cV \mvmap \cV$ (where $\cV$ is the collection of vertices) such that $\nu'\in\cF(\nu)$ if and only if there is a directed edge $\nu \rightarrow \nu'$ in the state transition diagram. Using the multivalued map notation the existence of a path from $\nu$ to $\nu'$ is expressed by $\nu'\in \cF^k(\nu)$ for some positive integer $k$. 

Let $\ell$ be the wall-labeling for a switching network at a fixed parameter.
We give three constructions for state transition diagrams. 
In each case, note that the state transition diagrams depends only on $\ell$, and hence it is through the wall-labelling that the state transition diagrams inherit their dependence on parameters. 
In fact, as indicated by Remark~\ref{rem:wall}, this inheritance remains constant on equivalence classes of parameter values.

\begin{defn}\label{defn:wallgraph}
The \emph{wall graph $\cF\colon \cV \mvmap \cV$ induced by the wall-labeling $\ell$} is defined as follows.
There is a bijection that identifies the set of vertices $\cV$ with the collection of attracting fundamental cells and the faces of all fundamental cells.
For each pair of faces $\tau$, $\tau'$ admitting a fundamental cell $\kappa$ such that $(\tau, \kappa)$ is an entrance wall and $(\tau', \kappa)$ is an absorbing wall, there is an edge $\tau \rightarrow \tau'$, or equivalently $\tau'\in\cF(\tau)$.
For each attracting domain $\kappa$, $\kappa \in \cF(\kappa)$ and $\kappa\in\cF(\tau)$ for each wall $(\tau,\kappa)$.
 \end{defn}
 
\begin{rem}
\label{rem:noBlack2}
A consequence of the assumption that the regulatory network does not have any direct negative self-regulation  (see Remark~\ref{rem:noBlack}) is that every node in the wall graph has an an out edge.
\end{rem}

\begin{defn} 
The \emph{domain graph $\cF\colon \cV \mvmap \cV$ induced by the wall-labeling $\ell$} is defined as follows. 
There is a bijection that identifies the set of  vertices $\cV$ with the collection of fundamental domains $\cK(\kappa)$. 
If some fundamental domain $\kappa$ is an attracting domain, then $\kappa \in \cF(\kappa)$. 
Furthermore, $\kappa'\in \cF(\kappa)$ whenever there exists a face $\tau$ such that $(\tau, \kappa)$ and $(\tau, \kappa')$ are walls such that $\ell((\tau, \kappa)) = -1$ (indicating an absorbing wall of $\kappa$) and $\ell((\tau, \kappa')) = 1$ (indicating an entrance wall of $\kappa'$). 
\end{defn}

\begin{defn} 
The  \emph{wall-domain graph $\cF\colon \cV \mvmap \cV$ induced by the wall-labeling $\ell$} is defined as follows.
There is a bijection that identifies the set of  vertices $\cV$ with the collection of attracting fundamental cells and the faces of all fundamental cells.
There are three types of edges.
If $(\tau, \kappa)$ is absorbing wall, then $\tau\in\cF(\kappa)$.
If $(\tau,\kappa)$ is an entrance wall, then $\kappa\in\cF(\tau)$.
Finally, if $\kappa$ is an attracting fundamental domain, then $\kappa\in \cF(\kappa)$.
\end{defn}

\subsection{Dynamical Signatures} \label{subsec:dynamics}

As indicated in the introduction, to store the dynamics we make use of a more compact representation.

\begin{defn}\label{def:recurrent} 
A \emph{recurrent component} (also referred to as a \emph{strongly connected path component}) of a directed graph $\cF$ is a maximal collection $\cC$ of vertices such that for any $u, v \in \cC$ there exists a non-empty path from $u$ to $v$ in $\cC$. In the context of dynamical systems we refer to a recurrent component of $\cF$ as a {\em Morse set} of $\cF$ and denote it by $\cM\subset \cV$. The collection of all recurrent components of $\cF$ is denoted by 
\[
\sMD(\cF) :=\setof{\cM(p)\subset \cV\mid p\in \sP}
\]
and is called a {\em Morse decomposition} of $\cF$. Here $\sP$ is an index set. Recurrent components inherit a well-defined partial order by the reachability relation in the directed graph $\cF$. Specifically, we may write the partial order on the indexing set $\sP$ of $\sMD(\cF)$  by defining
\[
q \leq p\quad \text{if there exists a path in $\cF$ from an element of $\cM(p)$ to an element of $\cM(q)$}.
\]
\end{defn}

Primarily for clarity we note the following facts regarding recurrent components:

\begin{prop} Two elements $\nu,\nu'\in\cV$ belong to the same recurrent component of $\cF$ if and only if there exist positive integers $k,k'$ such that $\nu'\in \cF^k(\nu)$ and $\nu\in \cF^{k'}(\nu')$. Distinct recurrent components are disjoint. Not every vertex need belong to some recurrent component. Recurrent components are strongly connected components. The only strongly connected components that are not recurrent components are singleton sets that do not have a self-edge.
\end{prop}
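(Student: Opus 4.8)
The plan is to establish the four assertions in sequence, since they are largely independent and each follows from the definition of recurrent component (Definition~\ref{def:recurrent}) together with elementary graph theory.

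First I would prove the biconditional characterization. Suppose $\nu,\nu'$ lie in the same recurrent component $\cC$. Then by definition there is a nonempty path from $\nu$ to $\nu'$ inside $\cC$, which in the multivalued-map notation means $\nu'\in\cF^k(\nu)$ for some $k\geq 1$; symmetrically $\nu\in\cF^{k'}(\nu')$ for some $k'\geq 1$. Conversely, suppose such $k,k'$ exist. I would set $\cC$ to be the set of all vertices $w$ admitting nonempty paths both from $\nu$ to $w$ and from $w$ to $\nu$, and verify that $\cC$ is a recurrent component containing both $\nu$ and $\nu'$: the mutual-reachability relation is symmetric and transitive (composing paths), every pair in $\cC$ is joined by nonempty paths in both directions through $\nu$, and maximality is forced by taking $\cC$ to be the full mutual-reachability class of $\nu$. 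The hypothesis $\nu'\in\cF^k(\nu)$ and $\nu\in\cF^{k'}(\nu')$ places $\nu'$ in exactly this class.

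For disjointness of distinct recurrent components, I would argue by contradiction: if $w$ lies in two recurrent components $\cC_1$ and $\cC_2$, then every vertex of $\cC_1$ is mutually reachable with $w$, hence with every vertex of $\cC_2$ (by transitivity through $w$), so $\cC_1\cup\cC_2$ is itself a collection of pairwise mutually-reachable vertices, contradicting maximality of each unless $\cC_1=\cC_2$. The claim that not every vertex belongs to a recurrent component is settled by a one-vertex example, such as a vertex with no self-edge and no incoming edge, for which no nonempty path from the vertex to itself exists.

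The remaining two statements compare recurrent components with strongly connected components in the usual graph-theoretic sense, where the latter typically permit trivial (length-zero) paths. The only discrepancy is the requirement in Definition~\ref{def:recurrent} that the path be \emph{nonempty}. I would observe that for any vertex with at least one edge into a cycle through it, the two notions coincide, and the single exceptional case is a singleton $\{v\}$ with no self-edge: such a $v$ is its own strongly connected component (trivially reachable from itself by the empty path) but is not a recurrent component, since there is no nonempty path from $v$ to $v$. Conversely, a singleton $\{v\}$ with a self-edge, or any strongly connected component of size at least two, admits nonempty closed paths and is therefore recurrent. I expect the main point requiring care is simply making the empty-versus-nonempty path distinction explicit, as this is precisely what separates the two notions; everything else is routine verification of symmetry, transitivity, and maximality of the mutual-reachability relation.
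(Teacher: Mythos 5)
Your proof is correct, and in fact the paper offers no proof of this proposition at all---it is stated ``primarily for clarity'' as a list of routine facts---so your argument simply supplies the intended verification, rightly identifying the empty-versus-nonempty path distinction as the only point of substance. One detail worth making explicit, since Definition~\ref{def:recurrent} requires the connecting paths to lie \emph{in} $\cC$: any vertex $w$ on a closed nonempty path through $\nu$ satisfies $\nu\in\cF^{k}(w)$ and $w\in\cF^{k'}(\nu)$ for some $k,k'\geq 1$ and hence lies in the mutual-reachability class of $\nu$, so the paths through $\nu$ that you invoke do remain inside your set $\cC$, as maximality then requires.
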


\begin{defn} \label{defn:morsegraph}    
The \emph{Morse graph} of $\cF$, denoted $\sMG(\cF)$, is the Hasse diagram of the poset $(\sP,\leq)$. We refer to the elements of $\sP$ as the \emph{Morse nodes} of the graph.\end{defn}

We note that the computation of Morse graphs is feasible and can be accomplished via well-known algorithms for strongly connected components \cite{tarjan1972depth} and transitive reduction \cite{aho1972transitive}.

\begin{prop} The Morse graphs induced by the wall graph, the wall-domain graph, and the domain graph are isomorphic.
\end{prop}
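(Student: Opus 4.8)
The plan is to treat the wall-domain graph as a master object and to realize both the wall graph and the domain graph as reachability-preserving contractions of it. First I would record the combinatorial shape of the wall-domain graph $\cF$: every edge is either of the form $\kappa\to\tau$ (from a domain to an absorbing wall), $\tau\to\kappa$ (from an entrance wall to a domain), or a self-loop $\kappa\to\kappa$ on an attracting domain. In particular the vertex set is ``bipartite'' into faces and domains, every non-loop edge alternates between the two types, and by Proposition~\ref{prop:att=l} an attracting domain has no absorbing walls, so its only outgoing edge is its self-loop; thus attracting domains are sinks that form isolated recurrent components. Comparing edge definitions, the wall graph's edges $\tau\to\tau'$ are exactly the two-step paths $\tau\to\kappa\to\tau'$ of the wall-domain graph through a (necessarily non-attracting) domain $\kappa$, together with the inherited edges into attracting domains; symmetrically, the domain graph's edges $\kappa\to\kappa'$ are exactly the two-step paths $\kappa\to\tau\to\kappa'$ through a face. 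Hence the wall graph is obtained from the wall-domain graph by contracting non-attracting domains, and the domain graph by contracting faces.

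The key lemma I would prove is that these contractions preserve reachability on the retained vertices. Because every path in the wall-domain graph alternates between faces and domains and cannot pass through an attracting sink except as a terminal vertex, any path between two faces has all intermediate domains non-attracting and so projects to a path of the wall graph; conversely every wall-graph edge lifts to a two-step path, so concatenation lifts any wall-graph path. Therefore reachability, and hence mutual reachability, between two faces is identical in the wall-domain graph and the wall graph; likewise reachability between two domains is identical in the wall-domain graph and the domain graph. I would also observe that neither a face nor a non-attracting domain can carry a self-loop, since a single wall cannot be simultaneously an entrance wall and an absorbing wall; consequently every nontrivial recurrent component lying off the attracting sinks contains at least one face and at least one non-attracting domain, alternating around its cycles.

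From this reachability lemma the correspondence of Morse graphs follows. A nontrivial recurrent component $\cM$ of the wall-domain graph is either an attracting domain (a singleton with a self-loop), which is simultaneously a recurrent component of the wall graph and of the domain graph, or else it meets both faces and non-attracting domains; in the latter case the faces of $\cM$ form a single recurrent component of the wall graph and the non-attracting domains of $\cM$ form a single recurrent component of the domain graph, precisely because mutual reachability among faces (respectively domains) is unchanged by the contraction. These assignments are mutually inverse bijections on recurrent components, since a face (respectively domain) lies in a cycle of the contracted graph if and only if it lies in a cycle of the wall-domain graph. Finally, reachability between recurrent components is the partial order defining the Morse graph of $\sMD(\cF)$, and since reachability between retained vertices is preserved, this order transports to identical orders on the recurrent components of the wall graph and of the domain graph. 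Passing to Hasse diagrams yields $\sMG(\text{wall-domain})\cong\sMG(\text{wall})$ and $\sMG(\text{wall-domain})\cong\sMG(\text{domain})$, whence all three Morse graphs are isomorphic.

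The main obstacle I anticipate is the bookkeeping in the bijection of recurrent components: one must rule out that contracting one side of the bipartite graph either splits a single recurrent component into several or fuses distinct ones. This is exactly controlled by the statement that reachability between retained vertices is unchanged, so the real content is the alternating-path argument together with the fact that attracting domains are genuine sinks. Nonetheless the edge cases — transient vertices that appear in no recurrent component, the distinction between trivial singletons and self-looped singletons, and the terminal attracting components shared by all three graphs — need to be handled explicitly in order to conclude an isomorphism of the posets, and hence of the Morse graphs, rather than merely a relationship between the underlying state transition diagrams.
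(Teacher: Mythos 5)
Your proof is correct and takes essentially the same route as the paper's: both use the wall-domain graph as the intermediary, establish that reachability between faces (respectively between fundamental domains) is identical under the two-step-path correspondence, and classify each nontrivial recurrent component as either an attracting singleton or one containing both faces and domains. You merely make explicit the bookkeeping the paper leaves to the reader, namely the alternating-path lemma, the sink property of attracting cells, and the bijection and order transport on recurrent components.
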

\begin{proof} In the wall graph, every entrance face maps to every absorbing face. Meanwhile in the wall-domain graph every entrance face maps to the fundamental cell which then maps to each of its exit faces. It follows that the reachability between faces in the wall graph and the wall-domain graph is the same. What is more, every recurrent component in the wall graph either contains a face or else is an attracting domain. From this one may establish the isomorphism between the Morse graph of a wall graph and the Morse graph of the wall-domain graph. 
Establishing the isomorphism of Morse graphs induced by the wall-domain graph and the domain graph is similar: we observe that the reachability between fundamental domains is the same in either and that each recurrent component must contain at least one domain. Combining these two isomorphisms gives the isomorphism from wall graph to domain graph.
\end{proof}

In light of the previous result and the simplicity of the domain graph one might wonder why we should bother with the wall graph or wall-domain graph at all. The answer is two-fold. First, it is possible to refine our analysis so that in the wall graph not every entrance face $(\tau,\kappa)$ is mapped to every absorbing face $(\tau',\kappa)$ for a fundamental cell $\kappa$. For example, consider Figure \ref{fig:translation}. In (a), we see the wall graph obtained in the present work in the situation of a 2D fundamental cell with two entrance and two exit faces arranged as indicated. However, for a particular parameter the actual trajectories would correspond to the diagrams sketched in (b) and (c). To achieve combinatorial descriptions capturing the additional information in (b) or (c) the domain graph is inadequate and it will be necessary to use notions such as the wall graph. We leave this to future work.

\begin{figure}
\begin{tikzpicture}
[scale=2.0]

	\draw[-] (0.0,0.0) -- (1.0,0.0);
	\draw[-] (0.0,0.0) -- (0.0,1.0);
	\draw[-] (1.0,0.0) -- (1.0,1.0);
	\draw[-] (0.0,1.0) -- (1.0,1.0);

	\draw[->] (0.0,0.5) -> (0.45,0.95);
	\draw[->] (0.0,0.5) -> (0.95,0.5);
	\draw[->] (0.5,0.0) -> (0.5,0.95);
	\draw[->] (0.5,0.0) -> (0.95,0.45);
\draw(0.5,-0.5) node{(a)};

	\draw[-] (2.0,0.0) -- (3.0,0.0);
	\draw[-] (2.0,0.0) -- (2.0,1.0);
	\draw[-] (3.0,0.0) -- (3.0,1.0);
	\draw[-] (2.0,1.0) -- (3.0,1.0);

	\draw[->] (2.0,0.5) -> (2.45,0.95);
	\draw[->] (2.0,0.5) -> (2.95,0.5);
	\draw[->] (2.5,0.0) -> (2.95,0.45);
\draw(2.5,-0.5) node{(b)};

	\draw[-] (4.0,0.0) -- (5.0,0.0);
	\draw[-] (4.0,0.0) -- (4.0,1.0);
	\draw[-] (5.0,0.0) -- (5.0,1.0);
	\draw[-] (4.0,1.0) -- (5.0,1.0);

	\draw[->] (4.0,0.5) -> (4.45,0.95);
	\draw[->] (4.5,0.0) -> (4.5,0.95);
	\draw[->] (4.5,0.0) -> (4.95,0.45);
\draw(4.5,-0.5) node{(c)};

\end{tikzpicture}
\caption{(a) Coarse wall graph where all entrance faces map to all exit faces. (b) Finer wall graph with a disallowed entrance-exit trajectory from bottom to top (c) Finer wall graph with disallowed trajectory from left to right}
\label{fig:translation}
\end{figure}
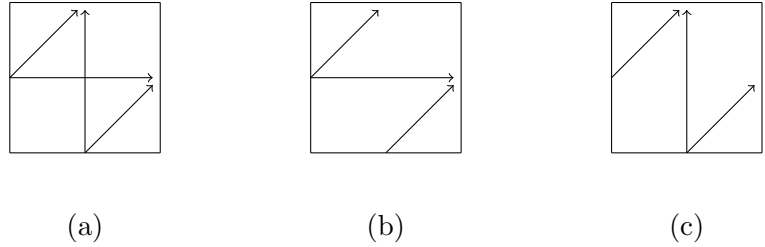

Second, via more sophisticated constructions of the directed graph we can obtain information about unstable dynamics from the Morse graph. These constructions involve using lower-dimensional cells such as faces as vertices and will be be detailed in a future work.

\subsection{Annotation of Morse Nodes}
\label{sec:annontation}

In addition to recording the Morse graph for a parameter $z \in Z$ it is possible to produce extra information in the form of annotations we associate with the Morse nodes of the Morse graph. 
We refer to this information, Morse graphs plus annotations, as a \emph{dynamical signature}. 
Presently, we compute annotations for each Morse node based on the vertices present in the associated Morse set. We briefly describe these annotations. First, we say that  a Morse set \emph{makes an $x_d$ transition} if it contains vertices corresponding to cells with differing $x_d$ coordinates. We make annotations according to the set of transitions. In the simplest case, there are no transitions (we have only a single attracting cell) and we annotate the Morse set as a fixed point but respecting three subcases: (a) if the fundamental cell is located to the left of each threshold in all dimensions the Morse node is marked $\mathsf{FP\,OFF}$ (\emph{fixed point off}); (b) if the fundamental cell is located to the right of at least one threshold in each dimension it is marked $\mathsf{FP\,ON}$ (\emph{fixed point on}); (c) otherwise it is marked just $\mathsf{FP}$. In the other extreme case every transition is made (i.e. $x_1, x_2, \cdots, x_N$). In this case we annotate the Morse node $\mathsf{FC}$ (for \emph{full cycle}). Otherwise we annotate the Morse node according to the subset of variables for which there is a transition.

%

\section{Parameter Graph}
\label{sec:parameterGraph}

The content of Sections~\ref{sec:switching}  and \ref{sec:transitionDiagram} implies that given a regulatory network $\bRN$ and a  parameter $z \in Z$ for the associated switching system \eqref{eq:switching} it is possible to create an annotated Morse graph.

Since two parameters $z, z' \in Z$ give rise to the same annotated Morse graph provided they give rise to the same wall-labeling (up to the equivalence indicated in Remark~\ref{rem:wall}) it is natural to discretize parameter space according to regions that are guaranteed to give the same wall-labeling. 
How to accomplish this is the topic of this section.
As indicated in the introduction, the final results are parameter graphs that are defined below.

The nodes of a parameter graph, which we denote by $\cZ$, are meant to correspond to regions in parameter space and the edges are meant to indicate  geometric relations between the regions.
The resulting \emph{database of dynamic signatures} can be viewed as a map
 \[ 
 \mathsf{DB} : \cZ \rightarrow \mathsf{AnnMG},
  \] 
where  $\mathsf{AnnMG}$ denotes the collection of annotated Morse graphs. 
As indicated in Section~\ref{sec:applications}, in applications queries to the database are often concerned with finding the set of nodes which correspond to a particular annotated Morse graph.

We remark that the parameter space $Z\subset [0,\infty)^D$, as defined in Definition~\ref{def:regular_param}, is a semi-algebraic set, i.e.\ it is expressed in terms of polynomial inequalities. 
To see this note that by Definition \ref{def:paramspace}, $\bar{Z}$ is a semi-algebraic set and $Z$ is the complement in $\bar{Z}$ of the set of parameters that satisfy any one the equalities
\begin{align}
0 & = l_{j,i} - u_{j,i}, \nonumber \\
0 & = \gamma_i, 0 = \theta_{j,i}, 0 = l_{j,i}, 0 = u_{j,i}, \nonumber \\
0 & = \theta_{j,i}-\theta_{j',i}\quad \text{for distinct $j,j'\in\Targets(i)$, or} \label{eq:equalities} \\
0 & = \gamma_i\theta_{j,i} - \Lambda_i(\kappa) \quad\text{where $\theta_{j,i}$ defines a face of $\kappa$;} \nonumber
\end{align}
where $\kappa\in \cK(z)$ and by Remark~\ref{rem:monomial} $\Lambda_i(\kappa)$ can be expressed as a sum of monomials involving parameters.

We propose two conceptually different means of identifying $Z$ with $\cZ$.
In Section~\ref{subsec:geometricparametergraph} we describe an approach based on topological considerations;  the basic elements are connected components of $Z$ and their adjacency structure is defined in terms of closures of these sets. 
This gives rise to the \emph{Geometric Parameter Graph} ($\GPG$).

In Section~\ref{subsec:combinatorialparametergraph} we describe an approach that is explicitly computable; we consider subsets of $Z$ described by systems of strict inequalities. We define an adjacency structure by considering reversing the direction of these inequalities one at a time. This gives rise to  the \emph{Combinatorial Parameter Graph} ($\CPG$).

In Section~\ref{subsec:factorization} we present a decomposition of the geometric and combinatorial parameter graphs as a product of smaller graphs in a way which corresponds to the structure of the regulatory network. 
In Section~\ref{subsec:equivalence} we compare the $\GPG$ and $\CPG$ graphs. 
There is a canonical graph homomorphism $h:\GPG \to \CPG$ induced from inclusion, but it is an open question whether or not this homomorphism is in fact always an isomorphism (i.e. we have not proven in general that these two approaches are equivalent). 
We are, however, able to give a class of networks for which $\GPG$ and $\CPG$ are known to be the same. 
Finally, in Section~\ref{subsec:computation} we will discuss the computation of combinatorial parameter graphs.

\subsection{Geometric Parameter Graph} \label{subsec:geometricparametergraph}

The state transition diagram $\cF(\cdot,z)\colon \cV(z)\mvmap \cV(z)$ is defined for each $z\in Z$ and is completely determined by the wall-labeling \eqref{eqn:sign}. The ordering $O$ and the wall-labeling is constant on connected components of $Z$. 
This motivates a combinatorialization of parameter space via the connected components, which we label by $\cZ$.
With this in mind  we define the \emph{parameter nodes} of  \emph{Geometric Parameter Graph} ($\GPG$) to be $\cZ$.  
To complete the definition of the $\GPG$ we require a notion of adjacency for parameter nodes. 

\begin{defn} \label{defn:parameterGraph} We say that a parameter value $z \in \bar{Z}$ is \emph{$k$-deficient} if exactly $k$ of the equalities of \eqref{eq:equalities} are satisfied.
Given a switching network \eqref{eq:switchingSystem} the associated \emph{geometric parameter graph} $\GPG$ has vertices $\cZ$ and edges $(\zeta,\zeta')$ if there exists $z\in \cl(\zeta)\cap \cl(\zeta')$ such that $z$ is $1$-deficient. Here the closures $\cl(\zeta)$ and $\cl(\zeta')$ are taken in $\bar{Z}$.
\end{defn}

\begin{rem} We believe that the concept of $k$-deficient parameter values can be used generate a regular CW-decomposition of $\bar{Z}$ where the elements of $\cZ$ represent the cells of  dimension $D$. 
In this context our notion of adjacency in the $\GPG$ corresponds to cells that share a $D-1$ dimensional face. 
Extending this construction would provide a means of understanding the topology of regions of parameter space that are associated with specified dynamic phenotypes.
\end{rem}

The following example illustrates the concept of the geometric parameter graph.

\begin{ex}
\label{ex:hyparrange}
Consider the simplest regulatory network $\bRN =(X,E)$ where $X=\setof{1}$, $1 \to 1$, and $M_1(x) = x$.
The regulatory network is shown in Figure~\ref{fig:RN1}(a).
Since the $1\to 1$ edge is activating, the associated switching network takes the form
\begin{equation}
\label{eq:RN1}
\dot{x} = -\gamma x + \begin{cases} l & \text{if $x<\theta$,} \\ u & \text{if $x>\theta$.}\end{cases}
\end{equation}
The associated phase space and subdivision is shown in Figure~\ref{fig:RN1}(b).
In particular, there are two fundamental cells $\kappa_1 := (0,\theta)$ and $\kappa_2 := (\theta,\infty)$.
The dimension of the associated parameter space is $D=1+3 = 4$ and 
\[ 
Z=\setof{(l, u,\theta,\gamma)\mid l<u, \gamma\theta \notin \{l, u\} }\subset [0,\infty)^4. 
\]

Because of the simplicity of the problem, it is easy to enumerate all the state transition diagrams and compute the associated Morse graphs.
Figure~\ref{fig:RN1}(c) indicates the annotated Morse graphs.
Without the annotations Morse graphs $\sMG(1)$ and $\sMG(3)$ are equivalent and consist of a single node.
However, in the process of the computation we can identify that the fixed point is in $\kappa_1$ is less than the threshold, thus the node $x$ is in an `off' state. 
We denote this information by $\mathsf{FP\,OFF}$.
The annotated Morse graph includes this information at the node.
Similarly, the annotated Morse graph $\sMG(3)$ indicates that the fixed point is in $\kappa_2$.
We denote this information by $\mathsf{FP\,ON}$.
The Morse graph $\sMG(2)$ has two minimal nodes generated by two attracting cells in one of which the fixed point is less than the threshold and in the other the fixed point is greater than the threshold. Again these fixed points are annotated $\mathsf{FP\,OFF}$ and $\mathsf{FP\,ON}$, respectively.

The associated regions of parameter space, i.e.\ their defining inequalities, are indicated in Figure~\ref{fig:RN1}(d).
Observe that there is a straight line in $PN(i)$ from any point in $PN(i)$  to the origin.
Thus one can show that each region  $PN(i)$ is connected.
Thus, the $\GPG$ contains 3 nodes.
Observe that if $z\in\bar{Z}$ satisfies $\gamma\theta - u =0$, then $z\in \cl(PN(1))\cap \cl(PN(2))$ and 
similarly, if $z\in\bar{Z}$ satisfies $\gamma\theta - l =0$, then $z\in \cl(PN(2))\cap \cl(PN(3))$.
Thus, we have an edge between the nodes corresponding to $PN(1)$ and $PN(2)$ and an edge between the nodes corresponding to $PN(2)$ and $PN(3)$.
Therefore, Figure~\ref{fig:RN1}(d) is the $\GPG$ for this example.

\end{ex}

How to extend these arguments in Example~\ref{ex:hyparrange} to more complicated regulatory networks is not obvious.
However, consider $Z' := \setof{(l,u,\theta) \in [0,\infty)^3 \mid (l,u,\theta,1)\in Z} \subset [0,\infty)^3$ and observe that characterizing $Z'$ is equivalent to characterizing the complement of the degenerate finite hyperplane arrangement \cite{stanley} 
\[
\setof{ \setof{ v\in[0,\infty)^3 \mid a\cdot v =0 } \mid a\in A}
\]
where 
\[
A = \setof{(1,0,0), (0,1,0), (0,0,1), (1,-1,0),(1,0,-1),(0,1,-1)}.
\]
In this setting it is fairly easy to determine that $Z'$ consists of three unbounded connected components for which the origin is contained in their closures.
However, we are interested in characterizing all of $Z$, which implies that we need to consider complements of the nonlinear equations $-\gamma\theta + l =0$ and $-\gamma\theta + u=0$.
For a general regulatory network the dimension $D$ grows linearly in the number of vertices and edges and the terms of $\Lambda$, while multilinear, may consist of higher dimensional products of the parameters. 
This implies that the problem of understanding $Z$ is at least as challenging as that of determining the cells in a degenerate finite hyperplane arrangement. 
To deal with these complications, in the next subsections we turn to techniques from computational algebraic geometry. 
For now we content ourselves with the following result.

\begin{figure}
\begin{tikzpicture}
[scale=0.4]
	\draw (0,3.5) circle(1 cm);
	\draw (0,3.5) node{$x_1$};
			
	\draw[<-] (0.75,4.5) arc (-45:225:1 cm);
	
	\draw(0,0) node{(a)};
\end{tikzpicture}
\qquad
\begin{tikzpicture}
[scale=0.4]
	\draw[thick](0,3.5) -- (8,3.5);
	\draw(9,3.5) node{$x$};

	\draw (4,3) -- (4,4);
	\draw (0,3) -- (0,4);

	\draw (0,2.5) node{$0$};	
	\draw (4,2.5) node{$\theta$};
	\draw (2,2.5) node{$\kappa_1$};
	\draw (6,2.5) node{$\kappa_2$};

	\draw(4,0) node{(b)};
\end{tikzpicture}
\\
\
\\
\begin{tikzpicture}
[scale=0.4]
	\draw[thick] (-1,1) arc (0:360:2 and 1);
	\draw (-3,1) node{$\mathsf{FP\,OFF}$};
	\draw (-3,-1) node{$\sMG(1)$};

	\draw[thick]  (6,1) arc (0:360:2 and 1);
	\draw (4,1) node{$\mathsf{FP\,OFF}$};
	\draw[thick]  (11,1) arc (0:360:2 and 1);
	\draw (9,1) node{$\mathsf{FP\,ON}$};
	\draw (6.5,-1) node{$\sMG(2)$};
		
	\draw[thick]  (18,1) arc (0:360:2 and 1);
	\draw (16,1) node{$\mathsf{FP\,ON}$};
	\draw (16,-1) node{$\sMG(3)$};

	\draw(6,-2.5) node{(c)};
\end{tikzpicture}
\\
\
\\
\begin{tikzpicture}
[scale=0.4]
	\node at (0,1) [shape=rectangle,draw,label=below:$PN(1)$] {$0<l<u<\gamma\theta$};
	\node at (8,1) [shape=rectangle,draw,label=below:$PN(2)$] {$0<l<\gamma\theta <u$};
	\node at (16,1) [shape=rectangle,draw,label=below:$PN(3)$] {$0<\gamma\theta <l <u$};
	\draw[thick] (3.5,1) -- (4.5,1);
	\draw[thick] (11.5,1) -- (12.5,1);

	\draw(6,-2.5) node{(d)};
\end{tikzpicture}
\caption{(a) A self activating one node network. 
(b) Phase plane for switching network.  
(c) Annotated Morse graphs: $\sMG(1)$ has a single node generated by an attracting cell for which the fixed point is less than the threshold and annotated by $\mathsf{FP\,OFF}$; $\sMG(3)$ has a single node generated by an attracting cell for which the fixed point is greater than the threshold and annotated by $\mathsf{FP\,ON}$; and $\sMG(2)$ has two minimal nodes generated by  attracting cells in one of which the fixed point is less than the threshold and in the other the fixed point is greater than the threshold.
(d) Parameter graph.   
}
\label{fig:RN1}
\end{figure}
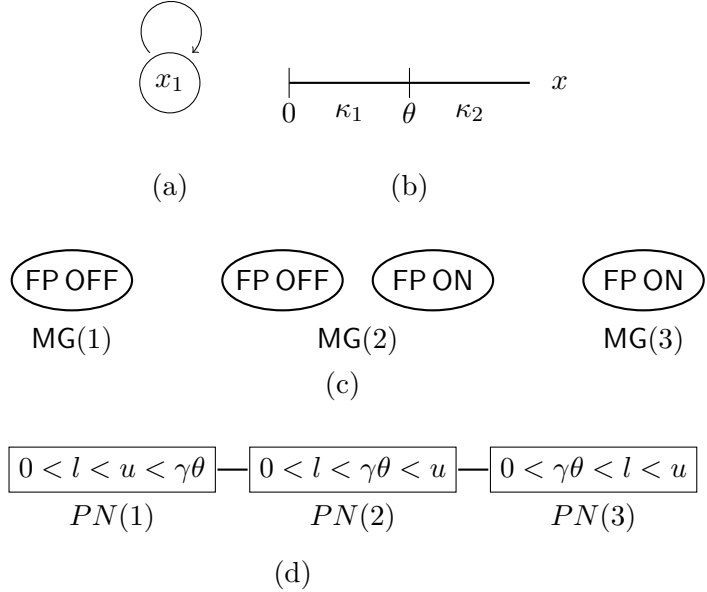

\begin{prop} \label{prop:GPGConnected} The $\GPG$ is a connected graph. \end{prop}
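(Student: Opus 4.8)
The plan is to realize $\GPG$ as the chamber-adjacency graph of a finite algebraic hypersurface arrangement sitting inside a convex open set, and then to connect any two chambers by pushing a generic path from one to the other. First I would fix the ambient geometry. The set $\bar{Z}$ is the intersection of the half-spaces $\{\gamma_i\geq 0\}$, $\{\theta_{j,i}\geq 0\}$, $\{l_{j,i}\geq 0\}$, $\{u_{j,i}\geq 0\}$ and $\{u_{j,i}-l_{j,i}\geq 0\}$, hence a full-dimensional convex cone; in particular $U:=\Int\bar{Z}$ is convex, open, and connected, and by Definition~\ref{def:regular_param} every regular parameter lies in $U$ since each of its defining inequalities is strict. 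Moreover $Z=\bar{Z}\setminus W$, where $W$ is the union of the zero sets of the polynomials in \eqref{eq:equalities}. The ``boundary'' walls $\{\gamma_i=0\}$, $\{\theta_{j,i}=0\}$, $\{l_{j,i}=0\}$, $\{u_{j,i}=0\}$, $\{l_{j,i}=u_{j,i}\}$ do not meet $U$, so inside $U$ we have $Z=U\setminus S$, where $S$ is the union of the threshold-collision walls $\{\theta_{j,i}=\theta_{j',i}\}$ and the nonlinear walls $\{\gamma_i\theta_{j,i}=\Lambda_i(\kappa)\}$. Thus the nodes $\cZ$ are exactly the connected components (chambers) of $U\setminus S$, and it suffices to join any two chambers by a walk whose edges arise from transversal crossings of single walls of $S$.

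The core of the argument is a general-position lemma. Let $B\subseteq S$ be the union of the pairwise intersections of distinct walls of $S$ together with the singular loci of the individual walls; I would show $\dim B\leq D-2$. Each threshold wall is a hyperplane and hence smooth, and each nonlinear wall is smooth on $U$ because $\partial(\gamma_i\theta_{j,i}-\Lambda_i(\kappa))/\partial\gamma_i=\theta_{j,i}>0$ there, so the gradient never vanishes. For the pairwise intersections the key observation, using Remark~\ref{rem:monomial}, is that $\Lambda_i(\kappa)$ is a polynomial in the $l$ and $u$ parameters only and involves no threshold variable. Consequently each nonlinear wall polynomial has degree one in $\gamma_i$ with leading coefficient $\theta_{j,i}$, hence is irreducible, and no two distinct wall polynomials (threshold or nonlinear) share a common factor. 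Therefore the common zero set of any two distinct walls has codimension at least two, $B$ is a semi-algebraic set of codimension $\geq 2$, and the $1$-deficient points of $U$ are precisely the smooth points of $S\setminus B$.

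Finally I would conclude by transversality. Given chambers $\zeta,\zeta'$, choose $z\in\zeta$ and $z'\in\zeta'$ and join them by a path in the convex set $U$. By standard general position---a generic perturbation misses the codimension-$\geq 2$ semi-algebraic set $B$ and is transverse to the smooth hypersurface $S\setminus B$---I may assume the path meets $S$ in finitely many points, each a transversal crossing of a single wall at a $1$-deficient point. These crossing points split the path into segments lying in a sequence of chambers $\zeta=\zeta_0,\zeta_1,\dots,\zeta_m=\zeta'$, and each crossing point lies in $\cl(\zeta_{k-1})\cap\cl(\zeta_k)$ and is $1$-deficient, hence realizes an edge of $\GPG$ whenever $\zeta_{k-1}\neq\zeta_k$. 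This exhibits a walk in $\GPG$ from $\zeta$ to $\zeta'$, proving connectedness.

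I expect the main obstacle to be the codimension bound on $B$: establishing that the threshold and nonlinear wall polynomials are pairwise without common factors---so that their intersections are genuinely codimension two and the individual nonlinear walls are nonsingular on $U$---is the step that genuinely uses the multilinear structure of the $\Lambda_i$ (in particular their independence of the threshold variables), and it must be carried out with some care. The transversality and perturbation steps, by contrast, are routine given that bound.
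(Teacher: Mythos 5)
Your proof is correct and follows the same overall strategy as the paper's: join two chambers by a path in parameter space, put it in general position so that it crosses one wall at a time transversally, and read off each crossing as a $1$-deficient point witnessing an edge of $\GPG$. The difference lies in how the two key technical facts are established. The paper sticks to straight-line segments and argues measure-theoretically: it shows the set of ``bad'' segments (those meeting the walls infinitely often, or meeting two walls at a common point) has measure zero, using the Fundamental Theorem of Algebra for finiteness of crossings and the Regular Value Theorem applied to the pair map $f_{ij}=(f_i,f_j)$ --- with linear independence of $\nabla f_i$ and $\nabla f_j$ deduced from the fact that distinct wall equations in \eqref{eq:positiveequalities} involve different variable sets --- to get that pairwise intersections are codimension-$2$ submanifolds. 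You instead work inside the convex interior $U=\Int\bar{Z}$ (correctly observing that the boundary and $l_{j,i}=u_{j,i}$ walls never meet $U$, which slightly shortens the case analysis relative to the paper, whose list retains $l_{j,i}-u_{j,i}$), and you obtain the codimension-$2$ bound algebraically: each wall polynomial is irreducible (degree one in $\gamma_i$ with coefficient $\theta_{j,i}$, and $\Lambda_i(\kappa)$ free of $\gamma_i$ and the thresholds), distinct walls share no common factor, hence distinct irreducible hypersurfaces meet in codimension at least two; finiteness of crossings then comes from smooth transversality rather than from the FTA. Each route buys something: the paper's gradient computation is elementary and yields smooth codimension-$2$ intersections directly, while your factorization argument is insensitive to any accidental alignment of gradients and avoids a case-by-case independence check. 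One small point deserves explicit care in your write-up, though it equally lurks in the paper's proof: distinct fundamental cells $\kappa\neq\kappa'$ can produce literally identical equalities $\gamma_i\theta_{j,i}=\Lambda_i(\kappa)=\Lambda_i(\kappa')$, so the list of wall polynomials (and the count in the definition of $k$-deficiency) must be understood with duplicates removed before asserting that ``distinct'' walls have no common factor or independent gradients.
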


\begin{proof}
Our proof is based on finding a straight line path between points in any two parameter nodes from which we can prove the existence of a corresponding path in the $\GPG$. The burden is to show we can avoid pathologies where (a) the straight line path passes through an accumulation of parameter nodes, or (b) passes from one parameter node to another through a point which is not $1$-deficient. Since the parameter nodes are all in the strictly positive orthant, we only need to consider deficiencies arising from one of the following equalities being satisfied (the other equalities of \eqref{eq:equalities} do not intersect the strictly positive orthant and need not be considered):
\begin{align}
0 & = l_{j,i} - u_{j,i}, \nonumber \\
0 & = \theta_{j,i}-\theta_{j',i}\quad \text{for distinct $j,j'\in\Targets(i)$, or} \label{eq:positiveequalities} \\
0 & = \gamma_i\theta_{j,i} - \Lambda_i(\kappa) \quad\text{where $\theta_{j,i}$ defines a face of $\kappa$;} \nonumber
\end{align}

Each of the equalities in \eqref{eq:positiveequalities} has a solution set in $(0,\infty)^D$ (for $D:= N+3\cdot\#(E)$) which we show is a codimension-1 submanifold. To see this, let $f_1, f_2, \cdots, f_k : (0,\infty)^D \rightarrow \R$ be the functions such that the equalities of \eqref{eq:positiveequalities} may be written $f_i = 0$ for $i=1,2,\cdots,k$, where $k$ is the number of equalities in \eqref{eq:positiveequalities}. We leave it to the reader to inspect \eqref{eq:positiveequalities} and see that the gradient of each $f_i$ (for each $i=1,2,\cdots,k$) is non-vanishing, i.e. $0$ is a regular value of $f_i$. By the Regular Value Theorem, the varieties $f^{-1}_i(0)$ are thus codimension-1 submanifolds $M_i$ of $(0,\infty)^D$ for $i=1,2,\cdots,k$. 

We consider a collection of line segments $\mathcal{L} = \mathcal{L}_a \cup \mathcal{L}_b$ where $\mathcal{L}_a$ denotes the collection of line segments which intersect $\bigcup_{i=1}^k M_i$ infinitely often, and $\mathcal{L}_b$ denotes the collection of line segments which intersect $\bigcup_{i\neq j} M_i \cap M_j$ (i.e. any two of the submanifolds at the same point). Describing line segments by their endpoints we regard $\mathcal{L}$ as a subset of $(0,\infty)^{2D}$. We make the following technical claim: $\mathcal{L}$ is a set of zero measure. We show this by proving both $\mathcal{L}_a$ and $\mathcal{L}_b$ have zero measure. Note this corresponds to showing the pathologies (a) and (b) are rare.

To see that $\mathcal{L}_a$ has zero measure, observe first that the union of finitely many codimension-1 submanifolds $\bigcup_{i=1}^k M_i$ has measure zero. It follows that the collection of line segments with either endpoint in $\bigcup_{i=1}^k M_i$ has measure zero. For any line segment with endpoints $p$ and $q$ which are both not in $\bigcup_{i=1}^k M_i$, we consider its parameterization $\psi:[0,1] \rightarrow (0,\infty)^D$ defined by $\psi(t) := (1-t) p + t q$. The compositions $f_i(\psi(t))$ are polynomials in $t$ which are not identically zero. By the Fundamental Theorem of Algebra they each have a finite number of zeroes. Since the zeros correspond to the intersections of the segment with the submanifolds, the segments are not in $\mathcal{L}_a$. It follows that $\mathcal{L}_a$ has measure zero.

To see that $\mathcal{L}_b$ has zero measure, define $f_{ij}:(0,\infty)^D \rightarrow \R^2$ via $f_{ij}(x) := (f_i(x), f_j(x))$ for $i \neq j$. We leave to the reader to inspect \eqref{eq:positiveequalities} and verify that since for $i\neq j$ the set of variables appearing in the expressions for $f_i(x)$ and $f_j(x)$ are not the same, the gradients $\nabla f_i\vert_p$ and $\nabla f_j\vert_p$ are linearly independent for any $p \in (0,\infty)^D$, i.e. the Jacobian $Df_{ij}\vert_p$ has full rank. Thus $0$ is a regular value of $f_{ij}$. By the Regular Value Theorem, $M_i \cap M_j = f_{ij}^{-1}(0)$ is a codimension-$2$ submanifold of $(0,\infty)^D$. It follows that the set of line segments intersecting $M_i \cap M_j$ for some $i \neq j$ is measure zero. Hence $\mathcal{L}_b$ has measure zero.

Consequently, we may always perturb a line segment by making arbitrarily small adjustments to the location of its endpoints so that it intersects the submanifolds defined by \eqref{eq:equalities} finitely often, and never intersects two submanifolds at the same point. Now choose $\zeta, \zeta' \in \GPG$ and let $p:[0,1] \rightarrow \bar{Z}$ be a straight-line path such that $p(0) \in \zeta$ and $p(1) \in \zeta'$. Since $\zeta$ and $\zeta'$ are open, there exist neighborhoods in which we may perturb $p(0) \in \zeta$ and $p(1) \in \zeta'$; we use this freedom so we can assume without loss that the line segment $p([0,1])$ satisfies this technical property. By the first technical condition $p(t) \in Z$ for all but finitely many $t \in [0,1]$ so there exists a finite sequence $(\zeta_0 = \zeta, \zeta_1, \zeta_2, \cdots, \zeta_n = \zeta')$ through $\GPG$ corresponding to a finite sequence of intervals $[t_0 = 0,t_1), (t_1, t_2), \cdots, (t_{n}, t_{n+1}=1]$ in $[0,1]$ for which $p((t_i, t_{i+1}))= \zeta_n$. By the second technical condition, $p(t)$ only intersects one submanifold at a time, hence for each $i=1, \cdots, n$ the point $p(t_i)$ is a 1-deficient point in the intersection of the closures of $\zeta_{i-1}$ and $\zeta_i$. These facts together yield a path $\zeta = \zeta_0 \to \zeta_1 \to \zeta_2 \to \cdots \to \zeta_n = \zeta'$. Hence $\GPG$ is connected. 
\end{proof}

\subsection{Combinatorial Parameter Graph} \label{subsec:combinatorialparametergraph}

In this section we show how to assign to each parameter $z \in Z$ a combinatorial description $\phi$ which is sufficient to construct the wall-labeling (and hence state transition diagrams) induced by $z$. 
We call this combinatorial description a \emph{combinatorial parameter} and we denote the collection of combinatorial parameters by $\Phi$. Importantly, we show that for every parameter node $\zeta \in \cZ$, for any $z, z' \in \zeta$, both $z$ and $z'$ have the same associated combinatorial parameter $\phi \in \Phi$.

Our definition of combinatorial parameters is unfortunately somewhat technical. Overall it amounts to a bookkeeping system to keep track of the directions of the inequalities involving parameters which determine the threshold order and wall-labeling function. The threshold orderings give rise to inequalities comparing $\theta$ parameters. To determine the wall-labeling function requires comparing the various $\Lambda_i$ to $\gamma_i \theta_{j,i}$ (which gives the signs in \eqref{eqn:sign}). For a given regulatory network there are a fixed number of such comparisons required; our definition of a combinatorial parameter provides an organizational framework to speak of this collection of inequalities in a rigorous way.

\begin{defn} Define the \emph{input combinations} of the node $x_i$ to be the cartesian product 
\[ \mathsf{In}_j := \prod_{i \in \mathsf{\Sources}(j)} \{\text{off}, \text{on}\}.\]
Define the \emph{indicator function} $\chi_j : (0,\infty)^N \rightarrow \mathsf{In}_j$ such that \[ \chi_{j,i}(x) = 
\begin{cases}
\text{off} & \text{if $i \to j$ and $x_i < \theta_{j,i}$ or if $i \dashv j$ and $x_i > \theta_{j,i}$ } \\
\text{on} & \text{if $i \to j$ and $x_i > \theta_{j,i}$ or if $i \dashv j$ and $x_i < \theta_{j,i}$ } \\
\text{undefined} & \text{otherwise.}
\end{cases}\]
Define the \emph{valuation function} $v_j : \mathsf{In}_j \to \R^{\Sources{j}}$ via
\[ v_{j,i}(A) = 
\begin{cases}
l_{j,i} & \text{whenever } A_i =\text{off} \\
u_{j,i} & \text{whenever } A_i = \text{on} \\
\text{undefined} & \text{otherwise.}
\end{cases}
\]
Note that $\sigma_j = v_j \circ \chi_j$. 

Define the \emph{output combinations} of the node $x_i$ to be the set
\[ 
\mathsf{Out}_i := \mathsf{\Targets}(i).
\]

\end{defn}

\begin{defn} \label{defn:combparam} 
A \emph{logic parameter} is a function 
\[ L : \bigsqcup_{i=1}^N \left(\mathsf{In}_i \times \mathsf{Out}_i\right) \rightarrow \{-1, 1\}.\] We denote the restriction of $L$ onto $\mathsf{In}_i \times \mathsf{Out}_i$ as $L_i$. An \emph{order parameter} $O$ is a collection of total orderings $O_i$ of $\Targets(i)$ for each $i \in X$. A \emph{combinatorial parameter} is a pair $\phi = (L, O)$ where $L$ is a logic parameter and $O$ is an order parameter. We denote the collection of combinatorial parameters as $\Phi$. 

The \emph{combinatorial assignment function} $\omega : Z \rightarrow \Phi$ is given by 
$\omega(z) := (L, O)$ where
\begin{equation} \label{eqn:phi_sign} 
L_i(A, B) = \sgn\left( M_i \circ v_i (A) - \gamma_i\theta_{B, i}\right) \mbox{ for all $1 \leq i \leq N$  }
\end{equation}
and $O = O(z)$. 
For all $z \in Z$, we say that $\omega(z)$ is \emph{the combinatorial parameter associated to the parameter $z$}. 

The \emph{parameter region associated with the combinatorial parameter $\phi$} is given by $\omega^{-1}(\phi) \subset Z$ and denoted by $\vert \phi \vert$.
A combinatorial parameter $\phi \in \Phi$ is \emph{realizable} if there exists $z \in Z$ such that $\phi = \omega(z)$.
\end{defn}

\begin{defn} 
Let $\phi = (L, O) \in \Phi$ be a combinatorial parameter. 
Suppose $z \in Z$ such that $O = O(z)$. 
We may induce a wall-labeling on $\mathcal{W}(z)$ as follows. 
Let $(\tau, \kappa)$ be a wall with projection index $i$ and switching index $j$. We say $(\tau, \kappa)$ is \emph{an absorbing wall} with respect to $\phi$ if $L_i(\chi_i(\kappa),j) = -\sgn((\tau,\kappa))$ and \emph{an entrance wall} if $L_i(\chi_i(\kappa),j) = \sgn((\tau,\kappa))$.
\end{defn}

The next result shows we have successfully given a combinatorial description $\phi = \omega(z)$ for each $z \in Z$.

\begin{prop} Let $z \in Z$. Define $\phi \in \Phi$ such that $\phi = \omega(z)$. Then the wall-labeling induced by $z$ and the wall-labeling induced by $\phi$ are the same. \end{prop}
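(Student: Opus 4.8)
The plan is to verify the equality of the two wall-labelings pointwise: it suffices to fix an arbitrary wall $(\tau,\kappa)\in\cW(z)$, say with projection index $i$ and switching index $j$, and show that the label assigned by the $z$-wall-labeling from \eqref{eqn:sign} coincides with the label determined by the absorbing/entrance classification induced by $\phi$. Both quantities take values in $\setof{-1,1}$ (there are no tangential walls since $z\in Z$ is regular), so it is enough to check that they agree as signs.

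First I would rewrite the $\phi$-induced labeling in multiplicative form. By definition, $(\tau,\kappa)$ is absorbing (label $-1$) with respect to $\phi$ exactly when $L_i(\chi_i(\kappa),j) = -\sgn((\tau,\kappa))$, and entrance (label $1$) exactly when $L_i(\chi_i(\kappa),j) = \sgn((\tau,\kappa))$. A short case analysis on whether $\sgn((\tau,\kappa)) = +1$ or $-1$ shows that in either case the induced label equals $\sgn((\tau,\kappa))\cdot L_i(\chi_i(\kappa),j)$. Here I should note that $\chi_i(\kappa)$ is well-defined: since $\kappa$ is a fundamental cell, each coordinate $x_i$ of a point $x\in\kappa$ lies strictly between consecutive thresholds, so $x_i\neq\theta_{j,i}$ for every $j$, and hence $\chi_i(x)$ is constant (and never ``undefined'') on $\kappa$, exactly as $\Lambda$ is constant on fundamental cells.

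The heart of the argument is then to identify $L_i(\chi_i(\kappa),j)$ with $\sgn(f^\kappa_i(\tau))$. Using the defining formula \eqref{eqn:phi_sign} for $\omega$, we have $L_i(\chi_i(\kappa),j) = \sgn\!\left(M_i\circ v_i(\chi_i(\kappa)) - \gamma_i\theta_{j,i}\right)$. The key identity is $\sigma_i = v_i\circ\chi_i$ together with $\Lambda_i = M_i\circ\sigma_i$ from \eqref{eq:lambdaAsComposition}, which gives $M_i\circ v_i(\chi_i(\kappa)) = M_i(\sigma_i(\kappa)) = \Lambda_i(\kappa)$. Substituting yields $L_i(\chi_i(\kappa),j) = \sgn\!\left(-\gamma_i\theta_{j,i}+\Lambda_i(\kappa)\right) = \sgn(f^\kappa_i(\tau))$, where the last equality is precisely \eqref{eqn:sign} restricted to $\tau$ (recall $x_i=\theta_{j,i}$ on $\tau$).

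Combining the two observations gives the $\phi$-induced label as $\sgn((\tau,\kappa))\cdot\sgn(f^\kappa_i(\tau))$, which is exactly $\ell((\tau,\kappa))$ as defined in \eqref{eqn:sign}. Since $(\tau,\kappa)$ was arbitrary, the two wall-labelings agree. I do not anticipate a genuine obstacle here, as the proposition is essentially an unwinding of the definitions; the only real care points are the sign bookkeeping in the case analysis and confirming that $\chi_i(\kappa)$ is a well-defined constant on the fundamental cell.
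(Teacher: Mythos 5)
Your proof is correct and follows essentially the same route as the paper's: both arguments reduce to checking, wall by wall, that $L_i(\chi_i(\kappa),j)$ agrees with $\sgn\left(\Lambda_i(\kappa) - \gamma_i\theta_{j,i}\right)$ via the composition identities $\sigma_i = v_i\circ\chi_i$ and $\Lambda_i = M_i\circ\sigma_i$, which is exactly the definitional unwinding the paper summarizes as ``this follows from the definitions and \eqref{eqn:phi_sign}.'' Your version simply spells out the sign bookkeeping and the well-definedness of $\chi_i(\kappa)$ that the paper leaves implicit.
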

\begin{proof}
It suffices to show $\phi$ and $z$ induce the same wall-labeling, which then in turn will induce the same wall graph. To this end it suffices to show for each wall $(\tau, \kappa)$ with projection index $i$ and switching index $j$ that (1) $L_i(\chi_i(\kappa),j) = +1$ is equivalent to $M_i(\sigma_i(\kappa))> \gamma_i \theta_{j,i}$, and (2)
$L_i(\sigma_i(\kappa),j) = -1$ is equivalent to $M_i(\sigma_i(\kappa)) < \gamma_i \theta_{j,i}$. This follows from the definitions and  (\ref{eqn:phi_sign}).
\end{proof}

We supplement combinatorial parameters with a notion of adjacency. To understand this notion of adjacency it helps to remember that a combinatorial parameter is nothing more than a bookkeeping system giving the direction of the inequality for a set of inequalities describing a region of parameter space. Our notion of adjacency corresponds to reversing a single one of these inequalities.

\begin{defn} \label{defn:paramnodeadj} 
Let $\phi = (L, O)$, $\phi' = (L', O') \in \Phi$ be distinct combinatorial parameters. Denote the domains of $L$ and $L'$ by $\mathcal{D} := \bigsqcup_{i=1}^N \left(\mathsf{In}_i \times \mathsf{Out}_i\right)$. We say $\phi$ and $\phi'$ are \emph{adjacent} if either (1) $O = O'$ and there exists $x \in \mathcal{D}$ such that $\phi$ and $\phi'$ are equal on all of $\mathcal{D}$ except $x$, or (2) $L = L'$, $O_i = O'_i$ for $i \in X \setminus {i^*}$, and the total orders $O_{i^*}$ and $O'_{i^*}$ differ only by a single swap of the ordering of consecutive thresholds.
 \end{defn}

The previous definition renders $\Phi$ into a large graph of combinatorial parameters. Since not every combinatorial parameter is realizable (i.e. in the image of $\omega$) we are interested only in a subgraph.

\begin{defn} \label{defn:combinatorialparametergraph} 
The \emph{combinatorial parameter graph} $\CPG$ is the undirected graph on the realizable combinatorial parameters with an edge between two parameter nodes $\phi$ and $\phi'$ if and only if they are adjacent in the sense of Definition \ref{defn:paramnodeadj}.
 \end{defn}

\begin{ex}
\label{ex:RN1}
To illustrate the idea of the combinatorial parameter graph, we  return to one node regulatory network, indicated in Figure~\ref{fig:RN1}(a), of Example~\ref{ex:hyparrange}.
Recall that the switching network is \eqref{eq:RN1};  the phase space is indicated in Figure~\ref{fig:RN1}(b); and Figure~\ref{fig:RN1}(c) indicates the annotated Morse graphs.
For the purposes of this example it is instructive to construct the collection of combinatorial parameters $\Phi$ and see which ones are realizable.
There is only a single node, so
\[
\mathsf{In} = \setof{ \text{off}, \text{on} }\quad\text{and}\quad \mathsf{Out} = \setof{1}
\]
and the order parameter is similarly trivial $O = \setof{\theta}$.
The set of logic parameters are
\begin{align}
\quad (\text{off},1) \mapsto -1 \quad & \text{and}\quad (\text{on},1) \mapsto -1 \label{eq:logic1}\\
\quad(\text{off},1) \mapsto 1 \quad & \text{and}\quad (\text{on},1) \mapsto -1 \label{eq:logic2}\\
\quad(\text{off},1) \mapsto -1 \quad & \text{and}\quad (\text{on},1) \mapsto 1 \label{eq:logic3}\\
\quad(\text{off},1) \mapsto 1 \quad & \text{and}\quad (\text{on},1) \mapsto 1 \label{eq:logic4}
\end{align}
and therefore $\Phi$ contains four elements.

To determine the realizable combinatorial parameters we note that the indicator function $\chi\colon (0,\infty)\to \mathsf{In}$ is
\[
\chi(x) = \begin{cases} \text{off} & \text{if $x<\theta$} \\ 
\text{on} & \text{if $\theta< x$} \\ 
\text{undefined} & \text{otherwise} \\
\end{cases}
\]
and the valuation function $v : \mathsf{In} \to \R$ is 
\[
v(A) = \begin{cases} l & \text{if $A =$ off} \\ 
u & \text{if $A =$ on.}  \\
\end{cases}
\]
Note that $M$ is the identity and thus to understand the image of the combinatorial assignment function $\omega : Z \rightarrow \Phi$ we only need to consider 
\[
L(A,B) = \sgn\left(  v (A) - \gamma \theta \right).
\]
Observe that if $A=$ off, then 
\[
L(\text{off},1) = \sgn\left(  l - \gamma \theta \right) = \begin{cases} -1 & \text{if $l<\gamma\theta$} \\
1 & \text{if $\gamma\theta < l$} \\
\end{cases}
\]
and if $A=$ on, then 
\[
L(\text{on},1) = \sgn\left(  u - \gamma \theta \right) = \begin{cases} -1 & \text{if $u<\gamma\theta$} \\
1 & \text{if $\gamma\theta < u$} \\
\end{cases}
\]
Observe that if $l<u<\gamma\theta$, $l<\gamma\theta < u$, and $\gamma\theta < l<u$, then we obtain the combinatorial parameters with logic parameters \eqref{eq:logic1},
\eqref{eq:logic3}, and \eqref{eq:logic4}, respectively. Notice that the combinatorial parameter with logic parameter \eqref{eq:logic2} is not realizable since it would require $\gamma\theta < l$ and $u < \gamma \theta$, which contradicts $l < u$.
Thus $\CPG$ has three nodes.
The edges are as indicated in Figure~\ref{fig:RN1}(d), since these correspond to the switching of a single inequality. 
Thus, as promised (see Theorem~\ref{thm:isomorphism}) the $\CPG$ agrees with the $\GPG$ of Example~\ref{ex:hyparrange}.
\end{ex}

\subsection{Product Structure of Parameter Graph} \label{subsec:factorization}

Both $\GPG$ and $\CPG$ have a product structure. To describe this structure we need to define what is meant by a product of graphs:
\begin{defn} \label{defn:graphproduct} Given a collection of graphs $\{G_\alpha\}_{\alpha\in J}$ the \emph{graph product} $\prod_{\alpha \in J} G_\alpha$ is the graph with nodes which are $J$-tuples $x$ such that $x_\alpha \in G_\alpha$ for $\alpha \in J$ and two $J$-tuples $x$ and $y$ are adjacent if and only if $x_\alpha = y_\alpha$ for all but possibly one exceptional value $\alpha^* \in J$, and for that exceptional $\alpha^*$, $x_{\alpha^*}$ and $y_{\alpha^*}$ are adjacent in $G_{\alpha^*}$. \end{defn}

\begin{defn} For each $i \in X$, define the \emph{geometric factor graph} $\GPG_i$ to be the connected components of the complement of the solutions of the equalities
\begin{align*}
0 & = l_{j,i} - u_{j,i}, \\
0 & = \gamma_i, 0 = \theta_{j,i}, 0 = l_{j,i}, 0 = u_{j,i}, \\
0 & = \theta_{j,i}-\theta_{j',i}\quad \text{for distinct $j,j'\in\Targets(i)$, or} \\
0 & = \gamma_i\theta_{j,i} - \Lambda_i(\kappa) \quad\text{where $\theta_{j,i}$ defines a face of $\kappa$;}
\end{align*}
in \[ Z_i := \{ (l,u,\theta,\gamma) \in [0,\infty)^{1 + 3\#\Targets(i)} \}.\]
Two connected components are considered to be adjacent if they admit a $1$-deficient point in the intersection of their closures.
\end{defn}

\begin{defn}\label{def:combinatorialfactorgraph}
Let $i \in X$. Define $\Phi_i$ to be the collection of pairs $(L_i, O_i)$ where $O_i$ is an ordering of $\Targets(i)$ and $L_i$ is a function
\[ L_i : \mathsf{In}_i \times \mathsf{Out}_i \rightarrow \{-1, 1\}.\] Define a function $\omega_i : Z_i \rightarrow \Phi_i$ via $\omega(z) := \phi_i = (L_i, O_i)$ whenever
\begin{equation} \label{eqn:cpg_factor}
L_i(A, B) = \sgn\left( M_i \circ v_i (A) - \gamma_i\theta_{B, i}\right).
\end{equation}
We say two elements $\phi_i, \phi_i' \in\Phi_i$ are \emph{adjacent} if they differ in only one value. We say $\phi_i$ is \emph{realizable} if $\omega_i^{-1}(\phi_i)$ is non-empty. We denote the subgraph of realizable elements in $\Phi_i$ as $\CPG_i$ and call it the \emph{combinatorial factor graph.}
\end{defn}

\begin{thm} \label{thm:factorization} For either $\sPG = \GPG$ or $\sPG = \CPG$, we have the following \emph{factor decomposition}:
\[ \sPG = \prod_{i=1}^N \sPG_i. \]
\end{thm}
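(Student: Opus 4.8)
The plan is to build everything on a single structural observation: the $D$ parameters split into $N$ pairwise-disjoint groups, one per node, and every piece of data entering the definitions of $\GPG$ and $\CPG$ is \emph{local}, i.e.\ depends only on the group of one node. Concretely, I assign to node $i$ the coordinate group
\[
P_i := \{\gamma_i\} \cup \{ l_{i,k}, u_{i,k} : k \in \Sources(i)\} \cup \{\theta_{j,i} : j \in \Targets(i)\},
\]
so node $i$ owns its decay rate, the expression levels of its \emph{in}-edges (exactly the parameters appearing in $M_i\circ v_i$), and the thresholds in the $x_i$ direction. Each edge then contributes its two expression levels to its target and its single threshold to its source, so the $P_i$ are disjoint and exhaust all coordinates, giving $\R^D = \prod_{i=1}^N \R^{P_i}$. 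The first step is to verify, straight from Definition~\ref{def:regular_param}, that every regularity condition lives inside one $P_i$: the constraint $l_{i,k}\le u_{i,k}$ (and its strict form), the distinctness of $\{\theta_{j,i}\}_j$, and the genericity condition $\Lambda_i(\kappa)\neq\gamma_i\theta_{j,i}$ all involve coordinates of $P_i$ only, since by Remark~\ref{rem:monomial} the function $\Lambda_i$ is assembled from the in-edge levels $l_{i,k},u_{i,k}$ alone. Consequently $\bar Z = \prod_i \bar Z_i$ and $Z = \prod_i Z_i$ as topological spaces, and I also record the companion fact that each equality in \eqref{eq:equalities} is a polynomial in the coordinates of exactly one $P_i$.

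With this lemma the $\CPG$ case is almost definitional. By Definition~\ref{defn:combparam} a combinatorial parameter is literally a tuple $(L,O)=((L_i,O_i))_{i=1}^N$, so $\Phi=\prod_i\Phi_i$ as sets, and \eqref{eqn:phi_sign} shows $\omega$ respects this, namely $\omega(z)=(\omega_i(z|_{P_i}))_i$, because $L_i$ and $O_i$ depend only on $P_i$. I would then factor realizability: using $Z=\prod_i Z_i$, a tuple $\phi=(\phi_i)$ lies in the image of $\omega$ iff each $\phi_i$ lies in the image of $\omega_i$ (forward by projection; backward by choosing $z_i\in\omega_i^{-1}(\phi_i)$ independently and assembling $z=(z_i)\in\prod_i Z_i=Z$). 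This gives the bijection on vertices between $\CPG$ and $\prod_i\CPG_i$. Finally I would match edges: the two clauses of Definition~\ref{defn:paramnodeadj} (a single flipped sign of $L$, or a single adjacent transposition of one $O_{i^*}$) say exactly that $\phi$ and $\phi'$ agree in every factor but one $i^*$, in which they are adjacent in the sense of Definition~\ref{def:combinatorialfactorgraph}; here one checks that the factor adjacency is precisely the restriction of Definition~\ref{defn:paramnodeadj} to a single block. This is the graph-product adjacency of Definition~\ref{defn:graphproduct}, so $\CPG=\prod_i\CPG_i$.

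For $\GPG$ the vertices are handled by the elementary fact that connected components of a finite topological product are products of components, so $\pi_0(Z)=\prod_i\pi_0(Z_i)$; writing $\zeta=\prod_i\zeta_i$ identifies the vertex sets of $\GPG$ and $\prod_i\GPG_i$. For the edges I would combine two ingredients: closures factor, $\cl(\prod_i\zeta_i)=\prod_i\cl(\zeta_i)$, and, by locality of the equalities, a point $z=(z_i)$ is $1$-deficient iff exactly one block $z_{i^*}$ is $1$-deficient while every other $z_i$ is regular. Hence a $1$-deficient witness $z\in\cl(\zeta)\cap\cl(\zeta')$ forces $z_i\in\cl(\zeta_i)\cap\cl(\zeta'_i)$ for all $i$; in the regular blocks $z_i\in Z_i$ lies in both components, so $\zeta_i=\zeta'_i$ there, while in the single deficient block $\zeta_{i^*}$ and $\zeta'_{i^*}$ share a $1$-deficient boundary point, i.e.\ are adjacent in $\GPG_{i^*}$. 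The converse assembles a witness from the factor-$i^*$ witness together with any regular point in the common components elsewhere. This is exactly the product adjacency, yielding $\GPG=\prod_i\GPG_i$.

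The main obstacle is the $\GPG$ edge correspondence, which, unlike the combinatorial argument, needs genuine topological input: that $\pi_0$ and closure commute with finite products, and—most delicately—that a $1$-deficient point can activate only one node's equality, so the regular blocks of the witness pin down all remaining factors and force $\zeta$ and $\zeta'$ to differ in exactly one coordinate. A secondary point that demands care is the bookkeeping of the partition itself: one must avoid the tempting but incorrect groupings that assign an entire edge (all three parameters) to a single endpoint, and instead split each edge as \emph{expression levels to the target, threshold to the source}, which is precisely what makes $\Lambda_i$ and the wall-labeling in the $x_i$ direction depend on $P_i$ alone.
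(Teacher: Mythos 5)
Your proof is correct and takes essentially the same route as the paper's: the product decomposition $Z=\prod_i Z_i$ with connected components of a product being products of components for the $\GPG$ vertices, locality of the equalities in \eqref{eq:equalities} (so that a $1$-deficient witness pins down every factor but one) for the $\GPG$ edges, and the definitional factorizations $\Phi=\prod_i \Phi_i$ and $\omega^{-1}(\phi)=\prod_i \omega_i^{-1}(\phi_i)$ for the $\CPG$. You simply make explicit what the paper leaves to the reader, in particular the exact coordinate partition (in-edge expression levels to the target node, threshold to the source node, $\gamma_i$ to node $i$) and the converse assembly of a $1$-deficient witness in the $\GPG$ edge correspondence.
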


\begin{proof} We show it first for the geometric parameter graph. First we demonstrate a one-to-one correspondence between the vertices of $\prod \GPG_i$ and $\GPG$. Observe that $Z = \prod Z_i$. By straightforward topological arguments a connected component in $Z$ is a product of connected components of $Z_i$ and vice-versa. This establishes a one-to-one correspondence of vertices between $\prod \GPG_i$ and $\GPG$. Now we show that vertices $\zeta, \zeta'$ are adjacent in $\GPG$ if and only if  they are adjacent in $\prod \GPG_i$. Assume first that $\zeta$ and $\zeta'$ are adjacent in $\GPG$. Then there exists a $1$-deficient point in the intersection of their closure. At all such $1$-deficient points there is a single equality of \eqref{eq:equalities} which is satisfied; it involves parameters corresponding to a definite factor $\GPG_i$ for some $i$. For all but a single exceptional $i = i^*$ we have $\pi_i(\zeta) = \pi_i(\zeta')$ and in the exceptional case $\pi_{i^*}(\zeta)$ and $\pi_{i^*}(\zeta')$ are adjacent in $\GPG_{i^*}$. By Definition~\ref{defn:graphproduct} this means $\zeta$ and $\zeta'$ are adjacent in $\prod \GPG_i$. The converse more or less runs this argument in reverse. Hence $\GPG = \prod \GPG_i$.

Next we show $\CPG = \prod \CPG_i$. In this case it follows very immediately from the definitions; $\Phi = \prod \Phi_i$ and the definition of adjacency in Definition~\ref{defn:paramnodeadj} is compatible with Definition~\ref{defn:graphproduct}. What remains is to see that a combinatorial parameter $\phi$ is realizable only if $\phi_i$ is realizable for all $i \in X$; to this end we leave it to the reader to verify from the definitions that $\omega^{-1}(\phi) = \prod \omega_i^{-1}(\phi_i)$ from which the result follows.
\end{proof}

The utility of this decomposition theorem is that it allows us a way of understanding parameter space piecewise; given a network we can consider a single node $i$ that has $n = \#\Sources(i)$ inputs, $m = \#\Targets(i)$ outputs, and an associated logic function $M_i$. Given these three things we may construct the factor graph. Thus, we can store a library of such factor graphs once and for all, and given a network we can immediately understand the combinatorial decomposition of parameter space by assembling this product structure.

\subsection{Computation of Parameter Graph} \label{subsec:computation}

Theorem \ref{thm:factorization} allows us to construct parameter graphs as products of factor graphs, so the problem of computing a parameter graph reduces to the problem of constructing the factor graphs.  In light of Proposition \ref{prop:GPGConnected} the factor graph is connected so we may explore it, and hence construct it, via any graph traversal technique, e.g.\ breadth-first-search, depth-first-search. 
To use this approach we require two ingredients: we must have a starting point, and we must be able construct the list of adjacent parameters.

For a starting point we may choose the combinatorial parameter $\phi=(L,O)$ where 
$L \equiv -1$ and $O$ may be chosen freely. This is guaranteed to be realizable: we obtain a realization if we choose $u$, $l$, and $\gamma$ values freely and then choose $\theta$ values in the desired ordering and sufficiently large so that $\Lambda_i < \gamma_i \theta_{j,i}$ for all $(i,j) \in E$.

To compute adjacency lists we first obtain from Definition~\ref{defn:paramnodeadj} a list of candidate adjacent combinatorial parameters in $\Phi$. Not every candidate adjacency is in $\CPG$ since not every combinatorial parameter is realizable. To obtain the adjacencies we filter the list of candidates in search of the realizable combinatorial parameters. In particular, we employ the computational algebra technique known as \emph{cylindrical algebraic decomposition} (CAD) \cite{collins1998partial} which provides a finite, recursive description of the geometric region $\zeta'$ associated with each candidate adjacent combinatorial parameter $\phi'$ (i.e. $\zeta' = \omega^{-1}(\phi')$). Since $\zeta'$ is given in terms of strict inequalities, we use the algorithm of \cite{strzebonski2000solving} as implemented in Mathematica. 

These algorithms can be quite expensive (worst case bounds are doubly-exponential in the number of variables) but are tractable for networks built out of components given in Table~\ref{table:goodnodes}.
From a CAD description of $\zeta'$ we can determine if it is empty and hence if $\phi'$ is realizable. If it is then $\zeta'$ is added into list of adjacent vertices of $\CPG$.

In this manner we can traverse the graph and construct the factor graphs, and hence the graph. Note that given a network component there is an $\#\Targets(i)!$-fold symmetry due to the number of possible rearrangements of the $\theta$ variable orderings; this can be used to reduce the number of CAD calculations; in particular we can compute the parameter graph only for a single ordering of thresholds and from this we can extrapolate the rest of the factor graph via symmetry in a straightforward manner. In particular, we can make a disjoint union of $\#\Targets(i)!$ independent copies of the graph and then connect them together via adjacencies corresponding to swapping threshold orderings whenever none of the $\Lambda$ values are between them.

\begin{rem}
\label{rem:CAD} As noted above  CAD computations can be quite expensive. 
Currently we know of no general way of constructing a $\CPG$ graph without using CAD unless we are content with performing computations for combinatorial parameters which are not realizable, i.e.\ do not correspond to some actual parameter $z \in Z$. 
This may seem like a defect of this approach but we should emphasize that  these CAD computations do not need to be repeated for each network we analyze. 
Rather, we construct a library of CAD results corresponding to each single node in a network with a given number of inputs, number of outputs, and logic. From this library of CAD computations we may analyze any network which may be built up from these components without doing any further computational algebraic geometry. 
Because this library has been constructed for all nodes of the form in Table \ref{table:goodnodes}, we may analyze any network built out of components with these node types without performing any additional CAD computations. 
The Mathematica scripts we executed to produce our library of CAD results may be found in the DSGRN software package \cite{dsgrn}.
\end{rem}

\subsection{Relationship between Geometric and Combinatorial Parameter Graphs} \label{subsec:equivalence}

For all $z, z' \in \zeta \in \GPG$, $\omega(z) = \omega(z')$, there is a well-defined map \[ h : \GPG \to \CPG \] such that $\zeta \mapsto \omega(\zeta)$. Moreover, the existence of a $1$-deficient point in the intersection of the closures of the connected components $\zeta, \zeta' \in \cZ$ immediately implies $h(\zeta)$ and $h(\zeta')$ are described by sets of inequalities which differ by only a single inequality reversal and hence are adjacent in $\CPG$. This renders $h$ into a \emph{graph homomorphism} as it maps vertices to vertices and edges to edges. Note that $h$ behaves nicely with the product structure indicated in the previous section.

As stated earlier, it is an open question whether or not $h$ is in general an isomorphism. The question ultimately comes down to two issues: (1) Are the geometric regions associated with the realizable combinatorial parameters connected? (2) Do the geometric regions associated with adjacent vertices of $\CPG$ always admit a $1$-deficient point in the intersection of their closures?
While we cannot at this time answer these questions in complete generality,  we do have the following result and a method for checking specific cases.

\begin{thm}\label{thm:isomorphism} The homomorphism $h$ is an isomorphism for regulatory networks for which each node in the regulatory network is described by an entry of Table~\ref{table:goodnodes}.
\end{thm}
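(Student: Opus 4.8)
The plan is to use the product decomposition of Theorem~\ref{thm:factorization} to reduce the statement to a node-local claim, reformulate that claim as two concrete geometric conditions on semi-algebraic regions, and then verify those conditions for each of the finitely many node types in Table~\ref{table:goodnodes} using the stored CAD data.

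First I would observe that $h$ respects the product structure: since $\omega$ is defined componentwise (compare \eqref{eqn:phi_sign} with \eqref{eqn:cpg_factor}) and connected components of $Z = \prod_i Z_i$ factor as products of connected components of the $Z_i$ (as in the proof of Theorem~\ref{thm:factorization}), the map $h$ decomposes as $h = \prod_{i=1}^N h_i$, where $h_i \colon \GPG_i \to \CPG_i$ is the induced factor homomorphism $\zeta_i \mapsto \omega_i(\zeta_i)$. Because a product of graph isomorphisms is an isomorphism of the corresponding graph products (Definition~\ref{defn:graphproduct}), it suffices to prove that each $h_i$ is an isomorphism. The factor graphs $\GPG_i$ and $\CPG_i$ depend only on the local data at node $i$ --- the number of sources, the number of targets, and the logic $M_i$ --- which is precisely the data recorded by an entry of Table~\ref{table:goodnodes}; so it is enough to verify the claim one table entry at a time.

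Next I would reduce ``$h_i$ is an isomorphism'' to two checkable geometric conditions. The map $h_i$ is a graph homomorphism and is surjective on vertices, since a realizable $\phi_i$ has, by definition, nonempty region $\omega_i^{-1}(\phi_i)$ and hence is hit by at least one connected component. Thus $h_i$ is an isomorphism precisely when (1) each realizable region $\omega_i^{-1}(\phi_i) \subset Z_i$ is \emph{connected} --- so that distinct vertices of $\GPG_i$ never collapse under $h_i$, giving injectivity on vertices --- and (2) for every pair of adjacent realizable combinatorial parameters $\phi_i, \phi_i'$ the regions $\omega_i^{-1}(\phi_i)$ and $\omega_i^{-1}(\phi_i')$ admit a $1$-deficient point in the intersection of their closures --- so that every edge of $\CPG_i$ is realized by an edge of $\GPG_i$, i.e.\ $h_i$ reflects edges (cf.\ Definition~\ref{defn:parameterGraph}). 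Condition (2) uses that adjacency of $\phi_i,\phi_i'$ means exactly one defining inequality is reversed, so a candidate boundary point must lie on the single corresponding equality hypersurface from \eqref{eq:equalities}.

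Finally I would carry out the verification of (1) and (2) for each entry of Table~\ref{table:goodnodes} using the cylindrical algebraic decompositions described in Section~\ref{subsec:computation}. For a fixed node type the set of combinatorial parameters is finite, and for each realizable $\phi_i$ the CAD furnishes an explicit cylindrical cell decomposition of $\omega_i^{-1}(\phi_i)$; from the cell-adjacency information one reads off connectedness (condition 1), and by inspecting the boundary cells one exhibits, for each adjacent pair, a point on the single reversed-equality hypersurface lying in both closures and satisfying no other equality of \eqref{eq:equalities}, i.e.\ a $1$-deficient point (condition 2). Since both the number of node types and the number of cells in each CAD are finite, this is a finite verification. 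I expect the main obstacle to be condition (1): a CAD presents a region as a union of cells, but connectedness of that union is not immediate from the decomposition and must be extracted from the adjacency relations among the cells, which is exactly the geometric information we lack in general and the reason the result is stated only for the tabulated node types rather than for arbitrary networks.
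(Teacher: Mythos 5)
Your proposal is correct and follows the paper's skeleton exactly: reduce via Theorem~\ref{thm:factorization} to the factor graphs $\GPG_i \simeq \CPG_i$, observe that vertex-surjectivity of $h_i$ is automatic from realizability, reduce the isomorphism claim to the same two conditions --- (1) connectedness of each region $\vert \phi_i \vert = \omega_i^{-1}(\phi_i)$ and (2) existence of a $1$-deficient point in $\cl(\vert \phi_i \vert) \cap \cl(\vert \phi_i' \vert)$ for each adjacent pair --- and then verify both by a finite computation for each entry of Table~\ref{table:goodnodes}. The one genuine divergence is your mechanism for certifying (2). You propose to extract cell-adjacency data from the CAD and exhibit an explicit boundary point on the reversed-equality hypersurface lying in both closures; this is valid in principle, but CAD cell adjacency is not standard output of the CAD algorithm and computing it is itself a hard problem in the dimensions that arise here (the factor parameter spaces for the $3$-in/$3$-out entries have dimension well above the range where adjacency algorithms are practical). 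The paper sidesteps this entirely with a soft argument: for an adjacent pair, drop the single flipped inequality to obtain a system $\mathcal{I}$ of strict inequalities whose (open) solution set contains $\vert \phi_i \vert \cup \vert \phi_i' \vert$; one checks by CAD that this relaxed region is connected, hence path-connected, and then the intermediate value theorem applied along a path from $\vert \phi_i \vert$ to $\vert \phi_i' \vert$ produces a point where equality holds in exactly the disagreeing inequality and in none of the others of \eqref{eq:equalities} --- a $1$-deficient point, with no explicit construction needed. In other words, the paper converts condition (2) into another instance of the same connectedness query used for condition (1), which Mathematica's quantifier-elimination machinery answers directly; your worry at the end about extracting connectedness from a CAD is thus the \emph{only} geometric primitive the paper needs, deployed twice, whereas your plan for (2) adds a second, harder primitive (boundary-cell inspection) that the published proof deliberately avoids.
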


\begin{proof} By Theorem \ref{thm:factorization} it suffices to consider only the factor graphs; all that we must show is that \[ \GPG_i \simeq \CPG_i.\]

This in turn becomes a finite computation for each entry of Table~\ref{table:goodnodes}.
In particular we use CAD to show the following:
\begin{enumerate}
\item For each $\phi_i \in \CPG_i$, the associated geometric region is connected.
\item For each pair of adjacent $\phi_i, \phi_i' \in \CPG_i$, the associated geometric regions admit a 1-deficient point in the intersection of their closures.
\end{enumerate}

For (1), observe each node $\phi_i$ in $CPG_i$ is associated with a parameter region $\vert \phi_i \vert$ in $Z$ defined by a collection of polynomial inequalities. These inequalities are obtained by writing threshold inequalities $\theta_{j_1,i} < \theta_{j_2,i}$ according to the threshold ordering $O_i$ and by writing either $M_i \circ v_i (A) < \gamma_i\theta_{B, i}$ or $M_i \circ v_i (A) > \gamma_i\theta_{B, i}$ for each $(A,B)\in \mathsf{In}_i \times \mathsf{Out}_i$ according to the sign of $L_i(A,B)$.

For (2), we consider the inequalities associated with adjacent parameters $\phi_i$ and with $\phi_i'$ in $CPG_i$. These two sets of inequalities are identical apart from a single inequality with its sign flipped. Dropping this inconsistent inequality yields a set of inequalities $\mathcal{I}$ describing a region in $\bar{Z}_i$ which contains the union of $\vert \phi_i \vert$ and $\vert \phi_i' \vert$. Since all the inequalities are strict, the solutions of $\mathcal{I}$ comprise an open set. If this open set is connected, then as an open subset of Euclidean space it is also path-connected; in particular take any path from a point in $\vert \phi_i \vert$ to a point in $\vert \phi_i' \vert$ in $\vert \mathcal{I} \vert$. By the intermediate value theorem it must pass through a point where equality is obtained on the inequality $\phi_i$ and $\phi_i'$ disagree on (but on none of the other inequalities). Thus we have identified a 1-deficient point on the intersection of the closures of $\vert \phi_i \vert$ and $\vert \phi_i' \vert$. 

Using Mathematica, we have performed these connectedness computations and verified (1) for each vertex and (2) for each adjacency of the factor graphs corresponding to network components indicated in the rows of Table~\ref{table:goodnodes}. 
The scripts which accomplish these computations may be found in the DSGRN software package \cite{dsgrn}.
\end{proof}

\begin{table}[]
\centering
\caption{Network Node Components}
\label{table:goodnodes}
\begin{tabular}{|c|c|c|c|}
\hline
  $\# \Sources(i)$ & $\#\Targets(i)$ & $M_i$ & $\# \sPG_i / \#\Targets(i)!$  \\
 \hline
 1 & 1 & $x$ & 3  \\
 1 & 2 & $x$ & 6  \\
 1 & 3 & $x$ & 10 \\ 
 2 & 1 & $x+y$ & 6  \\
 2 & 1 & $xy$ & 6  \\
 2 & 2 & $x+y$ & 20  \\
 2 & 2 & $xy$ & 20  \\
 2 & 3 & $x+y$ & 50  \\
 2 & 3 & $xy$ & 50  \\
 3 & 1 & $x + y + z$ &  20  \\
 3 & 1 & $xyz$ &  20  \\
 3 & 1 & $x(y+z)$ & 20  \\
 3 & 2 & $x + y + z$ &  150 \\
 3 & 2 & $xyz$ &  150 \\
 3 & 2 & $x(y+z)$ & 155 \\
 3 & 3 & $x + y + z$ &  707 \\
 3 & 3 & $xyz$ &  707 \\
 3 & 3 & $x(y+z)$ & 756 \\
 \hline
\end{tabular}
\end{table}

%
\section{Applications}
\label{sec:applications}

We demonstrate the use of the DSGRN database with two  elementary three-node networks, the repressilator and bistable repressilator models, and an example associated with the p53 network.
These databases, along with a growing number of other examples, can be found at the $\mathsf{DSGRN}$ database \cite{dsgrn}.

\subsection{The Repressilator}
\label{sec:repressilator}

We begin with the repressilator as shown in Figure~\ref{fig:networks}(a) for two reasons: 
first, it is of biological interest in that it has been constructed in \textit{E. coli} by~\cite{Elowitz00}; and second, it is extremely simple as it consists of  three genes that repress each other in a cycle and thus we can draw the entire parameter graph (see Figure~\ref{fig:pm}).

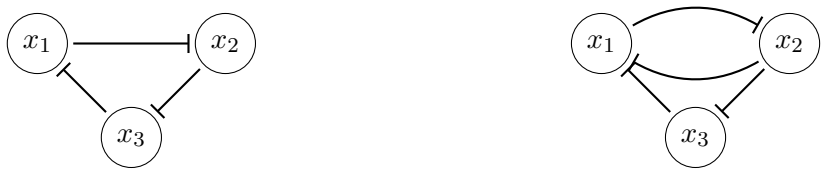
\begin{figure}[h!]
		\begin{center}
			\begin{tikzpicture}[main node/.style={circle,fill=white!20,draw,font=\sffamily\normalsize\bfseries},scale=2.5]
				\node[main node] (x1) at (0,0) {$x_1$};
				\node[main node] (y1) at (1,0) {$x_2$};
				\node[main node] (z1) at (0.5,-0.5){$x_3$};
				\node[main node] (x2) at (3,0) {$x_1$};
				\node[main node] (y2) at (4,0) {$x_2$};
				\node[main node] (z2) at (3.5,-0.5){$x_3$};

				\path[->,>=angle 90,thick]
				(x1) edge[-|,shorten <= 2pt, shorten >= 2pt] node[] {} (y1)
				(y1) edge[-|,shorten <= 2pt, shorten >= 2pt] node[] {} (z1)
				(z1) edge[-|,shorten <= 2pt, shorten >= 2pt] node[] {} (x1)
				;

				\path[->,>=angle 90,thick]
				(x2) edge[-|,shorten <= 2pt, shorten >= 2pt, bend left] node[] {} (y2)
				(y2) edge[-|,shorten <= 2pt, shorten >= 2pt] node[] {} (z2)
				(y2) edge[-|,shorten <= 2pt, shorten >= 2pt, bend left] node[] {} (x2)
				(z2) edge[-|,shorten <= 2pt, shorten >= 2pt] node[] {} (x2)
				;

			\end{tikzpicture}
			\caption{Left: Repressilator. Right: Bistable repressilator.}\label{fig:networks}
		\end{center}
\end{figure}

Applying Definition~\ref{defn:switching} the switching equations for the repressilator take the form
\begin{eqnarray} \label{repressilator}
\dot{x}_1 &=& -\gamma_1x_1 + \sigma^-_{1,3}(x_3) \nonumber \\
\dot{x}_2 &=& -\gamma_2 x_2 + \sigma^-_{2,1}(x_1) \\
\dot{x}_3 &=& -\gamma_3 x_3 + \sigma^-_{3,2}(x_2) \nonumber,
\end{eqnarray}
where the notation for the nonlinearities $\sigma$ follows the notation  of \eqref{eq:step_functions}.  
We write $\sigma^-$ as a reminder that the interaction represents repression; the second choice in \eqref{eq:step_functions}. 

The repressilator model has a single  regulatory threshold for each variable, dividing the phase space (which is the positive orthant of $\R^3$) into 8 cells and 12 walls. The domain and wall graphs from Section~\ref{subsec:diagrams} vary with parameter choice and therefore we  start with discussion of the space of parameters and  the parameter graph. 

For each $i$  the function $\sigma^-_{i+1,i}$ representing the edge $i \to {i+1}$ is parameterized  by  three parameters $u_{i+1,i},l_{i+1,i}$ and $\theta_{i+1,i}$ ($i$ is taken mod 3).  
In addition there are three decay constants $\gamma_1, \gamma_2, \gamma_3$. 
Therefore the parameter space $Z\subset \bar{Z}\subset \R^{12}$. 
To determine the parameter graph we first construct the combinatorial parameter graph $\CPG$ using Factor Decomposition Theorem~\ref{thm:factorization}. 
For each factor of the $\CPG$ we use the Table~\ref{table:goodnodes}.
Since the repressilator model has a single threshold for each  variable,  the threshold orders $O_i$ as in Definition~\ref{def:combinatorialfactorgraph} are trivial. 
The choice of logic $M_i$, see~\eqref{eq:lambdaAsComposition}, is also trivial. 
Therefore for each  $i=1,2,3$ we consider the  network node component in the  first row in Table~\ref{table:goodnodes}.  
The last column of the Table shows that  $ \# PG_i = 3$  for each variable $x_i$. 
To make this explicit, for variable $x_2$ these choices correspond to 
\[ 
l_{2,1} < u_{2,1} <  \gamma_1 \theta_{2,1},\quad l_{2,1} < \gamma_1\theta_{2,1} < u_{2,1},\quad \text{and}\quad \gamma_1\theta_{2,1} < l_{2,1} < u_{2,1}. 
\]
 By the Factor Decomposition Theorem~\ref{thm:factorization}, the combinatorial parameter graph $\CPG$ has $3^3=27$ nodes and edges as shown in Figure~\ref{fig:pm}(left).
 Finally, by Theorem~\ref{thm:isomorphism}  the combinatorial and geometrical parameter graphs are the same and therefore Figure~\ref{fig:pm}(left) also indicates the geometric parameter graph $\GPG$.
 
\begin{figure}
\centering
\begin{tabular}{ m{.5\textwidth}  m{.5\textwidth} }
\includegraphics[width=3in]{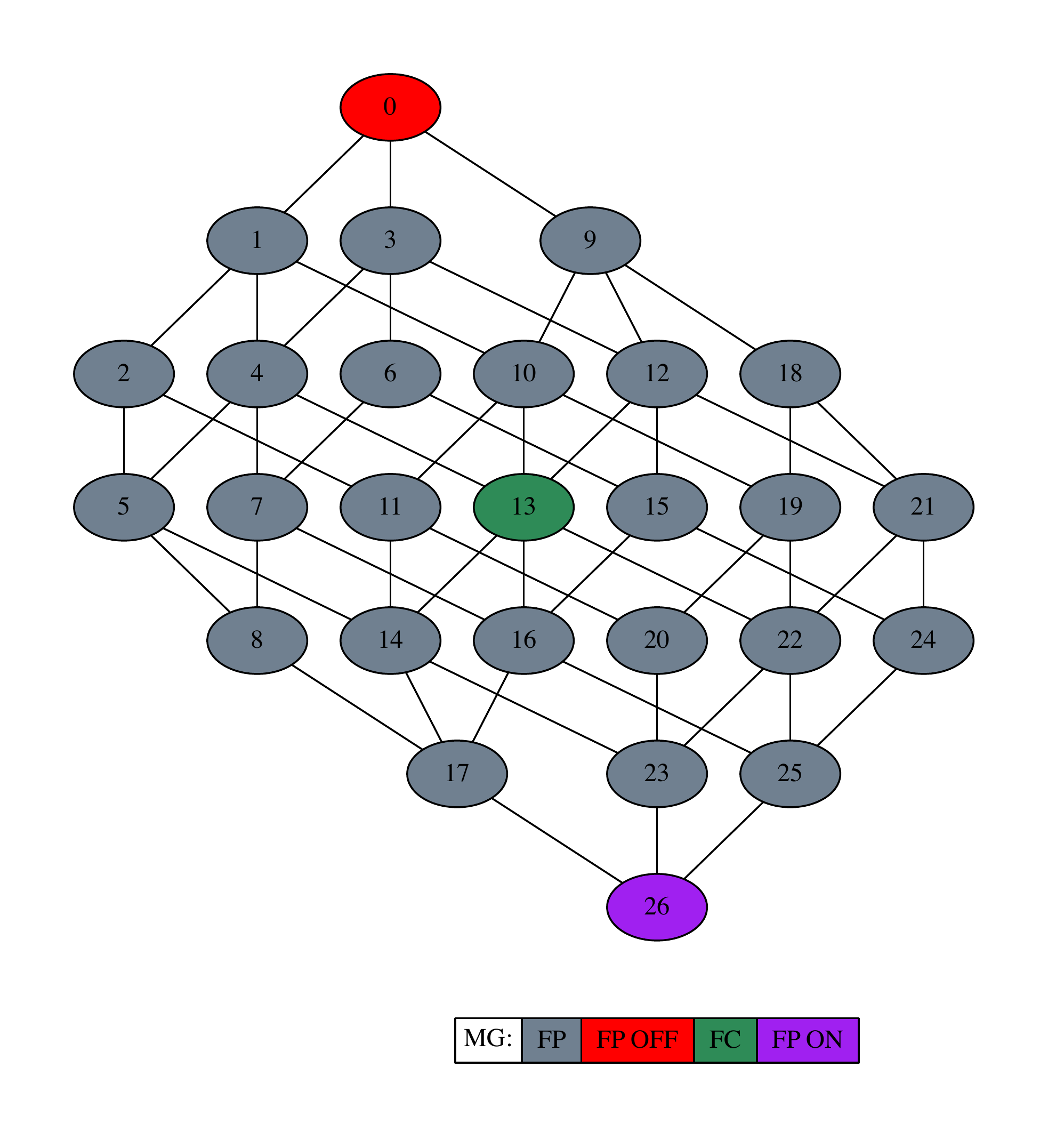} & \includegraphics[width=3.5in]{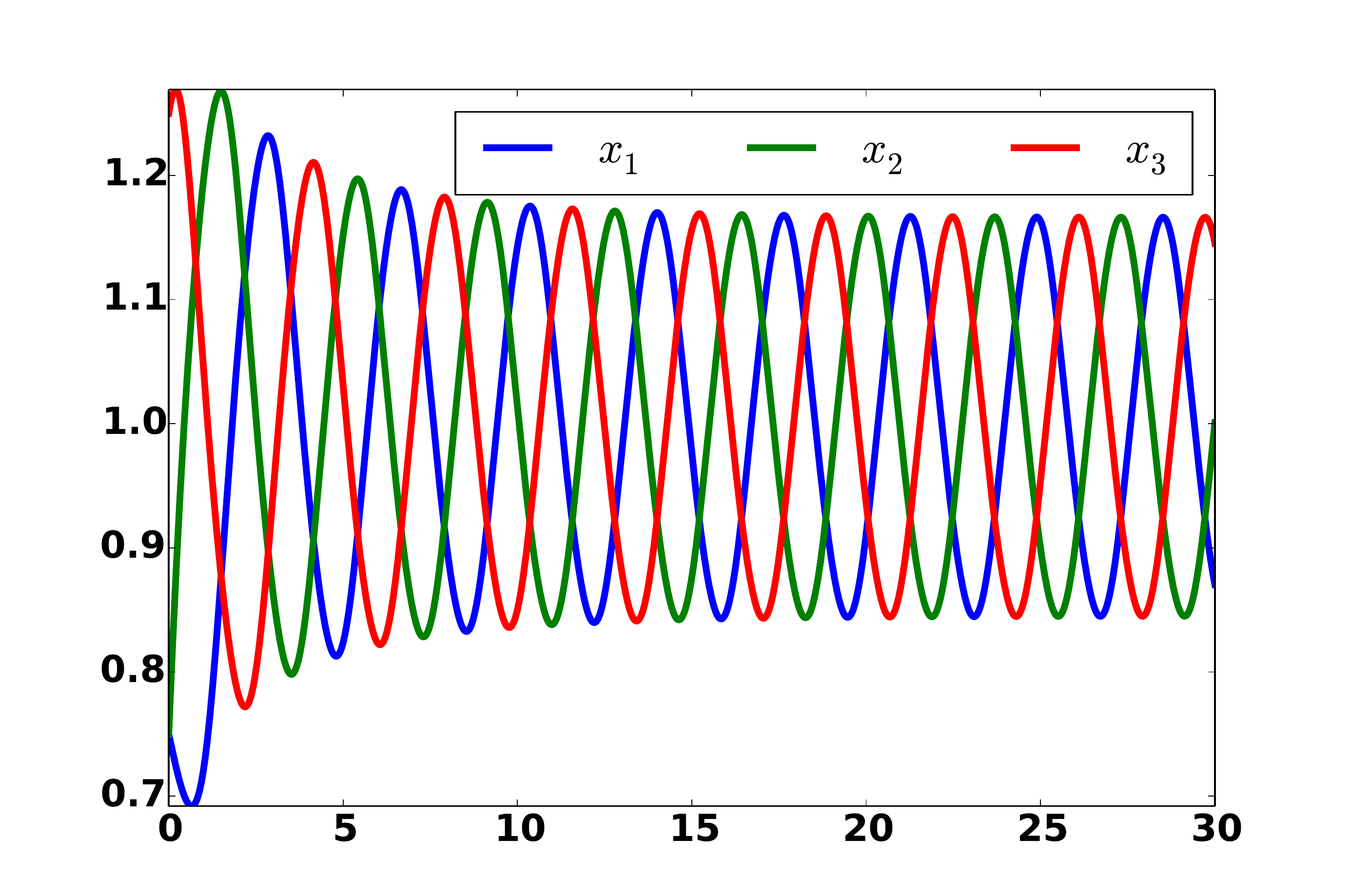}
\end{tabular}
\caption{Left: Repressilator parameter graph with same-colored parameter nodes corresponding to the same Morse graph. Right: Hill function simulation for the repressilator  satisfying the inequalities of parameter node 13 with $l_{i,j}=0.5$, $\theta_{i,j}=1.0$, $u_{i,j}=1.5$ (see Equation~\eqref{eq:rephill}). The Hill exponent is $n=9$.}
\label{fig:pm}
\end{figure} 
 
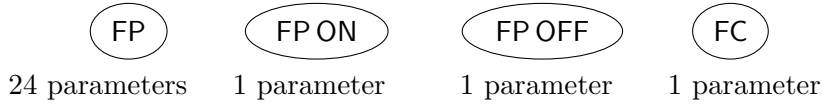
\begin{figure}[h!]
	\begin{tikzpicture}[main node/.style={ellipse,fill=white!20,draw,scale=1}]
		\node[main node] (FP) at (-4,0) {$\FP$};
		\node[main node] (FPON) at (-1.5,0) {$\FPon$};
		\node[main node] (FPOFF) at (1.5,0) {$\FPoff$};
		\node[main node] (FC) at (4,0) {$\FC$};

	\end{tikzpicture}
	\begin{tabular}{cccc}
		\hspace{-.1in}24 parameters & \hspace{.1in}1 parameter & \hspace{.2in} 1 parameter & \hspace{.1in} 1 parameter
	\end{tabular}
	\caption{$\mathsf{DSGRN}$ Morse graphs for the repressilator.}
\label{fig:mg_repress}	
\end{figure}

As indicated in Section~\ref{subsec:diagrams} the domain and wall graphs  and hence the the Morse graph (Definition~\ref{defn:morsegraph}) are potentially different at each of the 27 parameter nodes.
However, for the repressilator there is a single Morse graph each consisting of a single node.
This emphasizes the value of annotating the nodes of the Morse graph.
As described in Section~\ref{sec:annontation} this annotation indicates whether or not the
variable transition in the associated Morse set and if not whether their value is high or low.
As shown in Figure~\ref{fig:mg_repress} in the repressilator  the Morse graph can assume four different annotations: a fixed point, $\FP$; a fixed point where all variables are above the threshold, $\FPon$; a fixed point where all variables are below the threshold, $\FPoff$; and a full cycle $\FC$ where all variables pass  through their threshold.
In Figure~\ref{fig:mg_repress}, the number of parameters associated to each Morse graph is listed, with 24 of the 27 exhibiting the Morse graph $\FP$.

Returning to Figure~\ref{fig:pm} (left), which shows the entire parameter graph for the repressilator, each node is color-coded according to the associated  Morse graph.
We note that a single parameter node gives rise to this Morse graph $\FC$ indicating a periodic orbit. 
To further investigate the parameter set $R_{\FC} \subset Z$ which is represented by  this parameter node, the $\mathsf{DSGRN}$ database provides us with  the set of inequalities that define $R_{\FC}$:
\begin{eqnarray} \label{ineq}
l_{1,3} <  \gamma_1 \theta_{2,1} < u_{1,3} \nonumber \\
l_{2,1} <  \gamma_2 \theta_{3,2} <u_{2,1} \\
l_{3,2} <  \gamma_3 \theta_{1,3} <u_{3,2}. \nonumber
\end{eqnarray}

While  this is unique combination of parameters  is represented by a single parameter graph node,  $R_{\FC}$ is  clearly a substantial and unbounded component in  the parameter space $Z \subset \R^{12}$.   
To see if $R_{\FC}$ predicts well oscillations for a smooth repressilator model, we  replace the switching model by a model that uses  Hill function nonlinearities, which are closely related to the switching nonlinearities. 
Observe that for each $z\in \supp{\zeta}$, where $\zeta$ is a node in the parameter graph,  there is a natural one parameter family of Hill functions. For an activating step function $\sigma^+(x)$ this takes the form
\begin{equation} \label{eq:hillplus}
h_n^+(x) = l + (u-l) \frac{x^n}{\theta^n + x^n},  
\end{equation}
and the repressing step function $\sigma^-(x)$ by a Hill function $h_n^-(x) $
\begin{equation} \label{eq:hillminus}
 h_n^-(x) = l+ (u-l) \frac{\theta^n}{\theta^n + x^n}
 \end{equation}
 where the undetermined parameter is the Hill exponent $n$.
 Note that if $n \to \infty$, then $\lim_{n \to \infty} h_n^\pm(x) \to \sigma^\pm(x)$ pointwise for all $x \not = \theta$. 

We sample a point from $R_{\FC}$ that satisfies the inequalities (\ref{ineq}), select a Hill exponent, and simulate a Hill function model with these parameters. 
\begin{align} 
 \dot{x}_1 &= -x_1 + 0.5+ \frac{1}{1 + x_3^n}= -x_1 + g_3(x_3)  \nonumber\\
 \dot{x}_2 &= -x_2 + 0.5+ \frac{1}{1 + x_1^n}= -x_2 + g_1(x_1) \label{eq:rephill}\\
 \dot{x}_3 &= -x_3 + 0.5+ \frac{1}{1 + x_2^n}= -x_3 + g_2(x_2) \nonumber
 \end{align}
The choice of $n$ will affect the dynamics, and so it is important to choose an $n$ large enough so that the Hill function model~\eqref{eq:rephill} is reasonably representative of the switching model~\eqref{eq:switching}. 
For the repressilator, there is analysis available that allows us to suggest the minimum allowable $n$, which we calculate below. 
Tyson and Othmer~\cite{Tyson1978} proved a necessary secant condition for stability of a global fixed point $E$ in an $I$-dimensional cyclic feedback  system (see also Thron~\cite{Thron1991}),  given by   
\begin{equation*}
\frac{ \mid g_1'(E)\cdots g_I'(E) \mid }{\gamma_1\cdots\gamma_I} < \sec\left(\frac{\pi}{I}\right)^I, 
 \end{equation*}
 where the Jacobian of the system is 
 \begin{equation*}
 \begin{pmatrix}
 	-\gamma_1 & 0 & 0  & \dots & 0 & g_I'(E) \\
 	g_1'(E) & -\gamma_2 &  0 & \dots & 0 & 0\\ 
 	0 & g_2'(E) & -\gamma_3 &  \dots & 0 & 0\\
 	\dots & & & & & \\
 	0 & 0 & 0 & \dots & g_{I-1}'(E) & -\gamma_I
 \end{pmatrix} .
 \end{equation*}
 This condition is sharp when all of the decay rates are equal which is the case here.

Applying this condition to \eqref{eq:rephill}, we see that for the equilibrium is $E=(1,1,1)$ the secant formula becomes 
\begin{equation} \label{secant}
 \mid g_1'(1)g_2'(1)g_3'(1) \mid < \sec\left(\frac{\pi}{3}\right)^3.
 \end{equation}
 It is readily verified that $g_i'(1) = -n/4$ so that (\ref{secant}) takes the form
\[ n^3 < 4^3 2^3\] 
and therefore  $E=(1,1,1)$ is stable when $n < 8$. Since the condition is sharp, it is easy to show that at $n=8$ there is a Hopf bifurcation at which the equilibrium destabilizes and a stable periodic orbit is born. So for $n > 8$ in \eqref{eq:rephill}, there is a stable periodic orbit. In Figure~\ref{fig:pm} (right) we show a periodic orbit for  $n=9$.

\subsection{ The Bistable Repressilator}
The bistable repressilator is slightly more complicated than the repressilator in that it has an additional negative feedback. 
Figure~\ref{fig:networks} represents these regulatory networks in graphical form. 
Because of the double feedback loop we expect that for appropriate parameter values this system may exhibit bistability, hence the name. 
The associated switching system is given by
\begin{eqnarray} \label{bistable}
\dot{x}_1 &=& -\gamma_1x_1 + \sigma^-_{1,2}(x_2)\sigma^-_{1,3}(x_3) \nonumber \\
\dot{x}_2 &=& -\gamma_2 x_2 + \sigma^-_{2,1}(x_1) \\
\dot{x}_3 &=& -\gamma_3 x_3 + \sigma^-_{3,2}(x_2) \nonumber.
\end{eqnarray}
The logic $M_i$ for each node is trivial except for the first equation, where we have chosen AND logic, i.e.\ the negative influences of $x_2$ and $x_3$ are multiplicative.

In the bistable repressilator the variable $x_2$ represses both $x_3$ and $x_1$, and so there are two choices for $O_2$, $\theta_{3,2} < \theta_{1,2}$ or $\theta_{1,2} < \theta_{3,2}$. 
 Because of the extra threshold in the bistable repressilator, there are $12$ cells and 20 walls dividing the phase space. 
 
 The parameter space $Z \subset \R^{15}$ and we seek to understand the geometric parameter graph $\GPG$ that represents arrangement of components of $Z$. To construct the combinatorial parameter graph $\CPG$ we again consult the Table~\ref{table:goodnodes}. 
 The variable $x_1$ has 2 inputs, 1 output and the logic is multiplication; this corresponds to row $6$ in the Table and hence $\#  PG_1=6$. The  variable $x_2$ has 1 input and 2 outputs and so it correspond to row two. However, since the last column in Table~\ref{table:goodnodes} lists $\# PG_i$ divided by all possible permutations of output variable thresholds, $\# PG_2 = 12$.
 Finally, the variable $x_3$ has one input and one output, which corresponds  to the first row and thus  $\# PG_1 = 3$.
 By the Factor Decomposition Theorem~\ref{thm:factorization},  the $\CPG$ has $6*12*3=216$ parameter nodes and  by Theorem~\ref{thm:isomorphism}  the $\CPG$ and geometrical parameter graphs  $\GPG$ are the same. Since the parameter graph is sizable we only show in Figure~\ref{fig:bi_pm_4} a half of the parameter graph that corresponds to one of the two orders of the thresholds  in $O_2$ ($\theta_{3,2} < \theta_{1,2}$).

 \begin{figure}[h!]
 \centering
 \begin{tabular}{c}
 \includegraphics[width=6in]{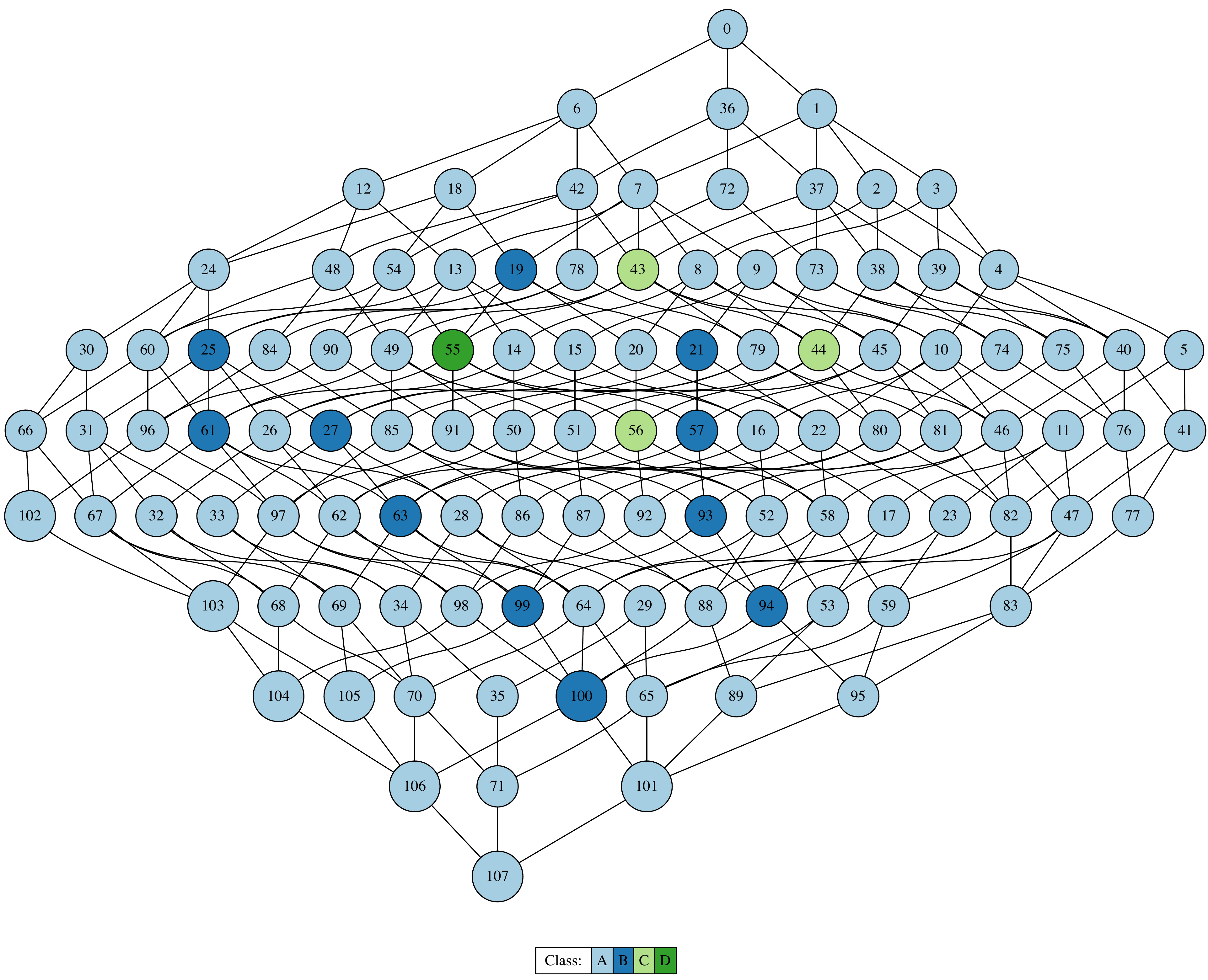} \\
 \includegraphics[width=3in]{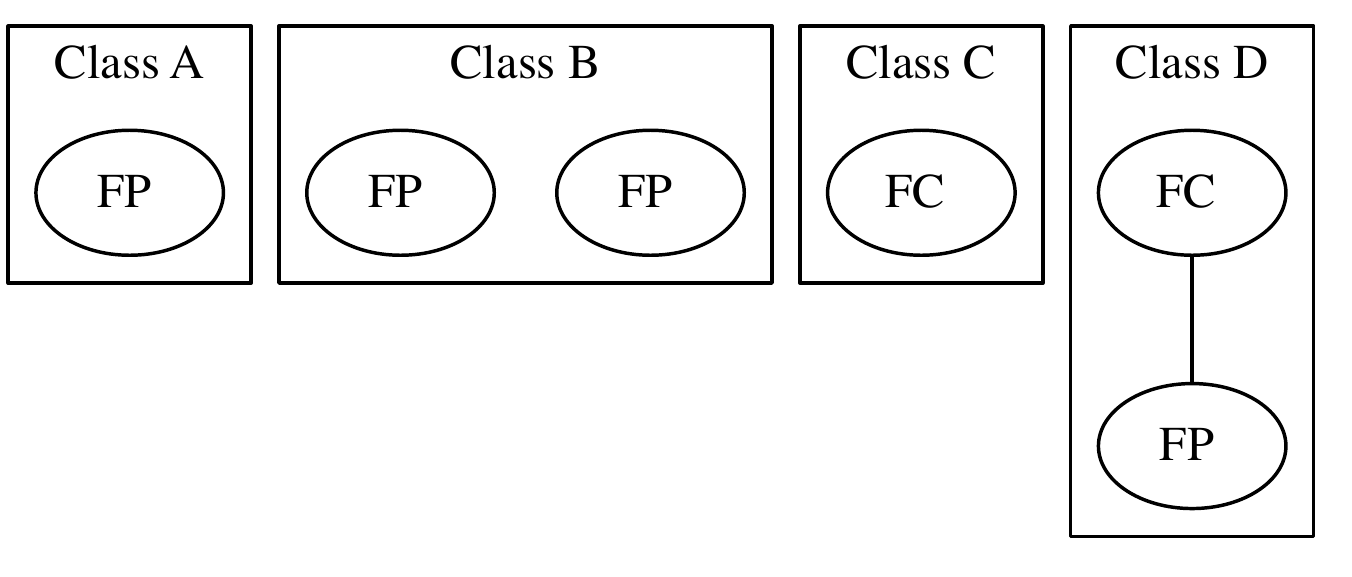}
 \end{tabular}
 \caption{Bistable repressilator parameter graph with colors corresponding to partitioned Morse graph continuation classes. Class A: single stable fixed points; Class B: bistability; Class C: stable cycle; Class D: unstable cycle with a stable fixed point.}
 \label{fig:bi_pm_4}
 \end{figure}

Using the same annotation as for the repressilator, there seven distinct classes indicated in Figure~\ref{fig:mg_bistable}.
For simplicity in Figure~\ref{fig:bi_pm_4} we group the parameter nodes based on the following four annotations:
type A nodes have Morse graph with  a single fixed point $\FP$; nodes in class B have a Morse graph with two fixed points $\FP$, and hence signal the presence of  bistability;   nodes of  type C have Morse graph $\FC$; and nodes of type D have the Morse graph with the  lower Morse set $\FP$ and the upper Morse set $\FC$.

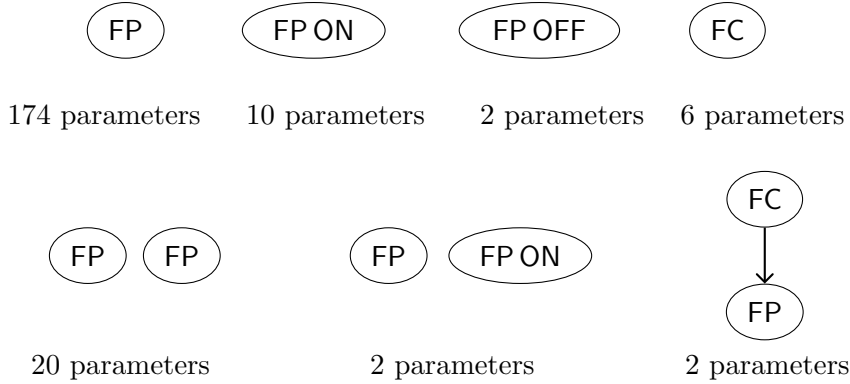
\begin{figure}[h!]
\centering
	\begin{tikzpicture}[main node/.style={ellipse,fill=white!20,draw,scale=1}]
		\node[main node] (FP1) at (-4,0) {$\FP$};
		\node[main node] (FPON1) at (-1.5,0) {$\FPon$};
		\node[main node] (FPOFF1) at (1.5,0) {$\FPoff$};
		\node[main node] (FC1) at (4,0) {$\FC$};
	\end{tikzpicture}

	\vspace{0.2in}

	\begin{tabular}{cccc}
		\hspace{0in}174 parameters & \hspace{.1in}10 parameters & \hspace{.1in} 2 parameters & \hspace{0in} 6 parameters\\
		&&& \\
	\end{tabular}

	\begin{tikzpicture}[main node/.style={ellipse,fill=white!20,draw,scale=1}]
		\node[main node] (FP2) at (-5,-2) {$\FP$};
		\node[main node] (FP3) at (-3.75,-2) {$\FP$};
		\node[main node] (FP4) at (-1,-2) {$\FP$};
		\node[main node] (FPON2) at (0.75,-2) {$\FPon$};
		\node[main node] (FC2) at (4,-1.25) {$\FC$};
		\node[main node] (FP4) at (4,-2.75) {$\FP$};

		\path[->,>=angle 90,thick]
		(FC2) edge[] node[] {} (FP4)
		;
	\end{tikzpicture}
	\begin{tabular}{ccc}
		\hspace{0.2in}20 parameters & \hspace{.7in}2 parameters & \hspace{0.6in} 2 parameters
	\end{tabular}
	\caption{$\mathsf{DSGRN}$ Morse graphs for the bistable repressilator.}
\label{fig:mg_bistable}	
\end{figure}

The collection of Morse graphs immediately signal the presence of richer dynamics across parameter space than for the repressilator system. In addition to the four Morse graphs seen in the repressilator example, there are two new dynamic signatures present: bistability (the existence of two stable fixed points, Type B) and an unstable full cycle with an attracting fixed point (type D).  The comparison of Figures~\ref{fig:mg_repress} and~\ref{fig:mg_bistable} that the addition of a single edge to a regulatory network $\bRN$ can radically change the dynamical signature of $\bRN$ across parameter space.

 From Figure~\ref{fig:mg_bistable}, we see that there are  six parameter vertices with the  Morse graph $\FC$, which suggest a presence of a  stable periodic oscillation. DGSRN database provides us with the inequalities that define these regions in the parameter $Z$. For illustration, we select one of them (parameter 151), which represents a region in $Z \subset \R^{15}$ given  by 
\begin{align}
	l_{1,2}l_{1,3} &<  \left\{ \begin{array}{c} u_{1,2}l_{1,3} \\ l_{1,2}u_{1,3} \end{array} \right \} < \gamma_1 \theta_{2,1} < u_{1,2}u_{1,3} \nonumber \\
	l_{2,1} &< \gamma_2 \theta_{3,2} < u_{2,1} < \gamma_2\theta_{1,2} \label{eq:birepparam}\\
	l_{3,2} &< \gamma_3\theta_{1,3} < u_{3,2}, \nonumber.
\end{align} 
The curly braces denote an undetermined (arbitrary) order:  the relative order of $u_{1,2}l_{1,3}$ and $ l_{1,2}u_{1,3}$ does not change the wall graph and hence the Morse graph as long as both  remain below $\gamma_1 \theta_{2,1}$.

We sample this parameter region  at the values $\gamma_1=\gamma_2=\gamma_3 = 1$, $l_{i,j}=1$, $\theta_{1,3}=2$, $\theta_{3,2}=3$, $\theta_{2,1}=4$, $\theta_{1,2}=6$, $u_{1,2}=2$, $u_{1,3}=3$, $u_{3,2}=4$, and $u_{2,1}=5$. Substituting these values into~\eqref{eq:hillminus}, we have the following system of smooth equations: 
\begin{align} 
 \dot{x}_1 &= -x_1 + \left(1+ \frac{6^n}{6^n + x_2^n}\right)\left(1+ \frac{2^{n+1}}{2^n + x_3^n}\right) \nonumber\\
 \dot{x}_2 &= -x_2 + 1+ \frac{4^{n+1}}{4^n + x_1^n} \label{eq:birephill}\\
 \dot{x}_3 &= -x_3 + 1+ \frac{3^{n+1}}{3^n + x_2^n}. \nonumber
 \end{align}
Results of the  simulations are  shown in Figure~\ref{fig:birephill} for $n=10$.
 There is no condition analogous to the secant condition that would provide  an estimate for $n$ that would  produce a stable periodic orbit. Numerically, we found that at these parameter values $n=7$ is sufficient for periodicity and $n=6$ is not.

\begin{figure}[h!]
\centering
\includegraphics[width=4in]{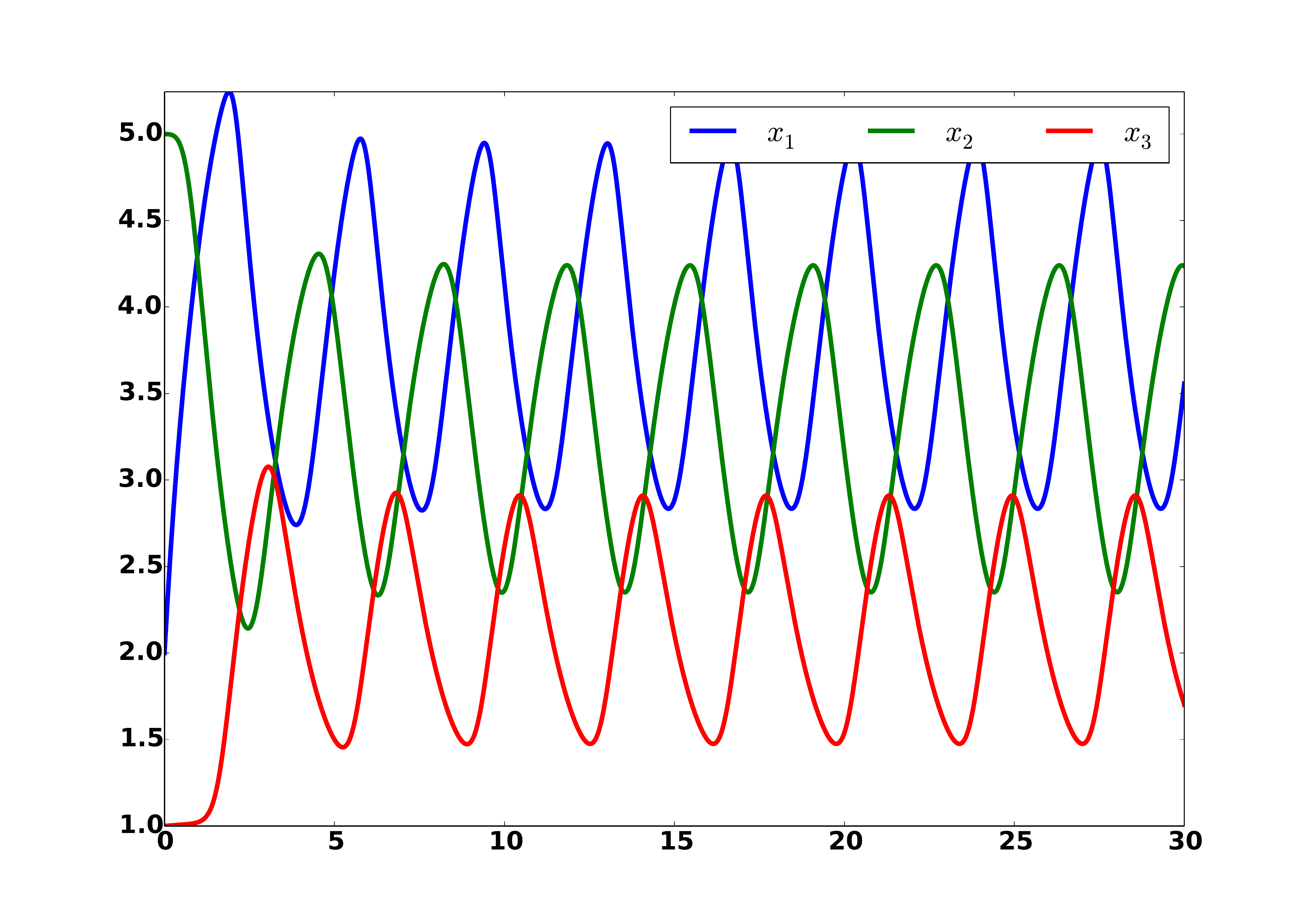}
\caption{Hill function simulation for the bistable repressilator at parameter node 151. See the text for parameter choices. The Hill exponent is $n=10$.}
\label{fig:birephill}
\end{figure}

We want to finish this section with an important observation about a  relationship between the parameter graph for the repressilator and the parameter graph for the bistable repressilator. 
Since the repressilator network is a subnetwork of the bistable repressilator, a natural question is whether there is a similar correspondence between their parameter graphs.
To begin to answer this question  we investigate the parameter node given by (\ref{eq:birepparam}) in bistable repressilator. 
We first note that because $ l_{2,1}  < u_{2,1} < \gamma_2\theta_{1,2}$ the second component of  any  target point in the system  $\frac{\sigma^-_{2,1}(\cdot)}{\gamma_2} < \theta_{1,2}$. 
Therefore, perhaps after a transient,  $x_2 (t) < \theta_{1,2}$ for all $ t\geq T$ and some $T>0$.
Consequently, the value of the function  $\sigma^-_{1,2}(x_2)$ will be  $u_{1,2}$ for all $t \geq T$ and $x_2$ will effectively cease regulation of $x_1$. Therefore the network that is {\em effectively} represented by this parameter node is not bistable repressilator, but a repressilator where the edge from $x_2$ to $x_1$ is erased. 
This brings up a set of interesting questions about how to recognize subnetworks that are effectively represented by each node in the parameter graph, and whether it is possible to build parameter graphs of larger networks from parameter graphs of their subnetworks. 
The answers to these  questions are beyond the scope of this paper but  will  be addressed in the near future. 

\subsection{p53 network}
While the first two examples of this section were aimed at illustrating the main concepts of the paper on small networks, $\mathsf{DSGRN}$ is being used on larger and more complicated networks with greater biological urgency.
We briefly comment on simulations of a subnetwork of the p53 signaling network from~\cite{Loewer2010}. 
The point of including this model in this paper is to indicate the ease with which $\mathsf{DSGRN}$ handles a network of this size and then allows the dynamics to be interrogated.  
 
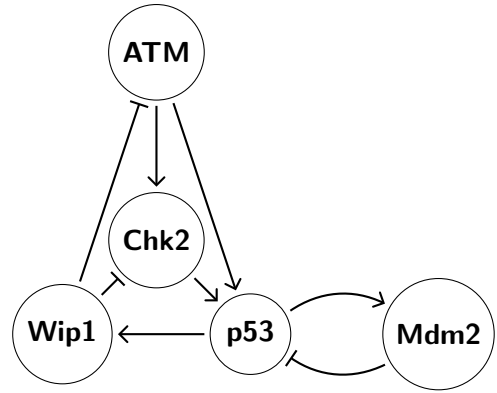
\begin{figure}[h!]
		\begin{center}
			\begin{tikzpicture}[main node/.style={circle,fill=white!20,draw,font=\sffamily\normalsize\bfseries},scale=2.5]
				\node[main node] (Wip1) at (0,0) {Wip1};
				\node[main node] (p53) at (1,0) {p53};
				\node[main node] (Mdm2) at (2,0) {Mdm2};
				\node[main node] (Chk2) at (0.5,0.5){Chk2};
				\node[main node] (ATM) at (0.5,1.5) {ATM};

				\path[->,>=angle 90,thick]
				(Wip1) edge[-|,shorten <= 2pt, shorten >= 2pt] node[] {} (Chk2)
				(Wip1) edge[-|,shorten <= 2pt, shorten >= 2pt] node[] {} (ATM)
				(p53) edge[->,shorten <= 2pt, shorten >= 2pt] node[] {} (Wip1)
				(p53) edge[->,shorten <= 2pt, shorten >= 2pt, bend left] node[] {} (Mdm2)
				(Chk2) edge[->,shorten <= 2pt, shorten >= 2pt] node[] {} (p53)
				(ATM) edge[->,shorten <= 2pt, shorten >= 2pt] node[] {} (p53)
				(ATM) edge[->,shorten <= 2pt, shorten >= 2pt] node[] {} (Chk2)
				(Mdm2) edge[-|,shorten <= 2pt, shorten >= 2pt, bend left] node[]  {} (p53)
				;

			\end{tikzpicture}
			\caption{Subnetwork of key species of the p53 signaling network.}\label{fig:p53}
		\end{center}
\end{figure}

As indicated in Figure~\ref{fig:p53} this network has 5 nodes and 8 edges and thus the parameter space $Z \subset \bar{Z}\subset\R^{29}$. 
As in the previous cases we construct the  geometric parameter graph $\GPG$ by first computing the $\CPG$ using the factors listed in Table~\ref{table:goodnodes}. This results in a $\GPG$ with 803520 nodes. Next we construct a $\mathsf{DSGRN}$ database over this parameter graph. This problem is quite tractable and the database construction took only 37 seconds on a Mid-2014 Macbook Pro laptop (Intel Core i7-4870HQ CPU @ 2.50GHz). 
We note that for larger parameter graphs, our software can scale to HPC cluster environments, but in this case this is clearly not necessary.

We remark that there is interest in the question of stable oscillations in this system \cite{geva-zatorsky:2006}.
Given the size of the $\GPG$, visualization is impractical. 
However, as indicated in the introduction $\mathsf{DSGRN}$ produces an SQL database. 
The query for a Morse graph with a minimal node annotated by $FC$ identifies parameter nodes associated with stable recurrent dynamics where all the species pass thresholds. 
There are 6904 nodes in the $\GPG$ which satisfy this query, of which 3204 are associated with a Morse graph consisting of a single node.
Each of these parameter nodes has a CAD description of the inequalities which defines the associated connected region in parameter space.
Table~\ref{table:cad40535} provides an explicit CAD description for one of these nodes (labeled 40535 in the database \cite{dsgrn}).
from which it easy  to chose  parameter values associated with the parameter node.

\begin{table}[]
	\begin{threeparttable}
		\centering
		\caption{CAD description of a parameter node for the p53 Network.}
		\label{table:cad40535}
		
		\begin{tabular}{| c | c | c | }
			\hline
			Component 1  		& Component 2 			& Component 3 \\
			\hline
			$0 <\text{T}_7$		& $0 <\text{T}_7$		& $0 <\text{T}_7$ \\
			$0 <\text{L}_1$		& $0 <\text{L}_1$		& $0 <\text{L}_1$ \\
			$0 <\text{L}_2$		& $0 <\text{L}_2$		& $0 <\text{L}_2$ \\
			$0 <\text{L}_3< \frac{\text{T}_7}{\text{L}_1+\text{L}_2}$	&	$0 < \text{L}_3  < \frac{\text{T}_7}{\text{L}_1+\text{L}_2}$	&
			$0 < \text{L}_3 < \frac{\text{T}_7}{\text{L}_1+\text{L}_2}$ \\
			$\text{L}_1 < \text{U}_1 \leq \frac{\text{L}_3 ( \text{L}_1 - \text{L}_2 ) + \text{T}_7}{\text{2L}_3}$	& 
			$\frac{\text{L}_3 (\text{L}_1 - \text{L}_2) + \text{T}_7}{2 \text{L}_3} < \text{U}_1 < \frac{\text{T}_7 - \text{L}_2 \text{L}_3}{\text{L}_3}$	&
			$\frac{\text{L}_3 (\text{L}_1 - \text{L}_2) + \text{T}_7}{2 \text{L}_3} < \text{U}_1 < \frac{\text{T}_7-\text{L}_2 \text{L}_3}{\text{L}_3}$ \\
			$\frac{\text{T}_7 - \text{L}_3 \text{U}_1}{\text{L}_3} < \text{U}_2 < \frac{\text{T}_7-\text{L}_1 \text{L}_3}{\text{L}_3}$	& 
			$\frac{\text{T}_7 - \text{L}_3 \text{U}_1}{\text{L}_3} < \text{U}_2 < \text{L}_2 - \text{L}_1 + \text{U}_1$	&	
			$\text{L}_2 + \text{U}_1 - \text{L}_1 \leq \text{U}_2 < \frac{\text{T}_7-\text{L}_1 \text{L}_3}{\text{L}_3}$ \\
			$\text{L}_3 < \text{U}_3 < \frac{\text{T}_7}{\text{L}_1 + \text{U}_2}$	&	$\text{L}_3 < \text{U}_3 < \frac{\text{T}_7}{\text{L}_2+\text{U}_1} $	&
			$\text{L}_3 < \text{U}_3 < \frac{\text{T}_7}{\text{L}_1 + \text{U}_2} $ \\
			$\text{T}_7 < \text{T}_8 < \text{L}_3( \text{U}_1 + \text{U}_2 )$ 	& $\text{T}_7 < \text{T}_8 < \text{L}_3 (\text{U}_1 + \text{U}_2)$	& $\text{T}_7 < \text{T}_8 < \text{L}_3 (\text{U}_1 + \text{U}_2)$ \\
			$0 < \text{T}_2$ 	& $0 < \text{T}_2$	    & $0 < \text{T}_2$ \\
			$0 < \text{L}_5$ 	& $0 < \text{L}_5$		& $0 < \text{L}_5$ \\
			$0 < \text{L}_6 < \frac{\text{T}_2}{\text{L}_5}$	&	$0 < \text{L}_6 < \frac{\text{T}_2}{\text{L}_5}$	&	$0 < \text{L}_6 < \frac{\text{T}_2}{\text{L}_5}$ \\
			$\text{L}_5 < \text{U}_5 < \frac{\text{T}_2}{\text{L}_6}$	&	$\text{L}_5	< \text{U}_5 < \frac{\text{T}_2}{\text{L}_6}$	&
			$\text{L}_5 < \text{U}_5 < \frac{\text{T}_2}{\text{L}_6}$ \\
			$\frac{\text{T}_2}{\text{U}_5} < \text{U}_6 < \frac{\text{T}_2}{\text{L}_5}$	&	$\frac{\text{T}_2}{\text{U}_5} < \text{U}_6 < \frac{\text{T}_2}{\text{L}_5}$	&	$\frac{\text{T}_2}{\text{U}_5} < \text{U}_6 < \frac{\text{T}_2}{\text{L}_5} $	\\
			$0 < \text{T}_6$  	& $0 < \text{T}_6$		& $0 < \text{T}_6$ \\
			$0 < \text{U}_7 < \text{T}_6$	&	$0 < \text{U}_7 < \text{T}_6$	&	$0 < \text{U}_7 < \text{T}_6$ \\
			$0 < \text{L}_7 < \text{U}_7$  	&	$0 < \text{L}_7 < \text{U}_7$	&	$0 < \text{L}_7 < \text{U}_7$ \\
			$\text{L}_7 < \text{T}_4 < \text{U}_7$		& $\text{L}_7 < \text{T}_4 < \text{U}_7$	&	$\text{L}_7 < \text{T}_4 < \text{U}_7$ \\
			$0 < \text{L}_4$ 	& $0 < \text{L}_4$		& $0 < \text{L}_4$	\\ 
			$\text{L}_4 < \text{U}_4$		&	$\text{L}_4 < \text{U}_4$	  	&	$\text{L}_4 < \text{U}_4$ \\
			$\text{L}_4 < \text{T}_1 < \text{U}_4$		& $\text{L}_4 < \text{T}_1 < \text{U}_4$	&	$\text{L}_4 < \text{T}_1 < \text{U}_4$ \\
			$0 < \text{T}_3$ 	& $0 < \text{T}_3$		& $ 0 < \text{T}_3$	\\
			$\text{T}_1 < \text{T}_5 < \text{U}_4$		& $\text{T}_1 < \text{T}_5 < \text{U}_4$	&	$\text{T}_1 < \text{T}_5 < \text{U}_4$ \\
			$0 < \text{L}_8 < \text{T}_3$ 	& $0 < \text{L}_8 < \text{T}_3$		&	$ 0 < \text{L}_8 < \text{T}_3$ \\
			$\text{T}_3 < \text{U}_8$	 	& $\text{T}_3 < \text{U}_8$			&	$\text{T}_3 < \text{U}_8$ \\
			\hline
		\end{tabular}	
		\begin{tablenotes}
			\item We use the following numeric scheme to identify the edges: \\
			1 = (ATM $\to$ p53), 2 = (Chk2 $\to$ p53), 3 = (Mdm2 $\to$ p53), 4 = (Wip1 $\to$ ATM), 
			5 = (ATM $\to$ Chk2), 6 = (Wip1 $\to$ Chk2), 7 = (p53 $\to$ Wip1), 8 = (p53 $\to$ Mdm2).\\
			The upper values, lower values, and the thresholds for each edge correspond to 
			$\text{U}_i$, $\text{L}_i,$ and $\text{T}_i$ respectively. \\
			Example: The upper value associated with the edge (ATM $\to$ Chk2) corresponds to $\text{U}_5$.
		\end{tablenotes}
		
	\end{threeparttable}
\end{table}

\begin{table}[]
\centering
\caption{P53 Network parameters}
\label{table:parameters}
\begin{tabular}{|l|c|c|c|}
\hline
  Edge  &  u -value & l-value & threshold \\
 \hline
 ATM $\to$ Chk2 & $1$ & $1/2$  & $1/2$  \\
 ATM $\to$ p53   &  $7/8$   & $7/32$ &  $1/4$ \\
 Chk2 $\to$ p53   &    $7/8$   & $7/32$ &  $3/4$ \\
 MdM2$\to$ p53   &   $7/8$    & $21/32$ &  $1$ \\
 Wip1 $\to$ ATM  &    $1$   & $1/2$ &  $1/2$ \\
 Wip1 $\to$ Chk2  &  $2$   & $1/2$ &   $2$\\
 p53 $\to$ Mdm2  &  $2$   & $1/2$ &   $1127/1024$ \\
 p53 $\to$ Wilp1  &  $1$   & $1/4$ &   $ 539/512$\\
  \hline
\end{tabular}
\end{table}

As  in the previous examples we perform a numerical simulation of this system using Hill functions.
Using the CAD description of Table~\ref{table:cad40535} we choose  decay rates $\gamma_i = 1$ and the remaining parameters of the switching system as presented in Table~\ref{table:parameters}.
It remains to choose the Hill exponent $n$ for each nonlinearity i.e.\ one exponent for each edge in the network. 
Setting all Hill exponents  to be $8$ the solution exhibits the oscillations depicted in Figure~\ref{fig:p53Hill}. 
The uniform choice of $n=6$ does not produce oscillations, but many other choices, e.g.\ setting $n=2$ for the connections Mdm2 $\to$ p53 and p53 $\to$ Mdm2  and $n=10$ for all other nonlinearities also produces oscillations. 
It is worth noting that the peaks of p53  in Figure~\ref{fig:p53Hill} come slightly ahead of the peaks of Mdm2 which agrees with one of the key  experimental observations in ~\cite{Loewer2010}.

\begin{figure}[h!]
\centering
\includegraphics[width=4in]{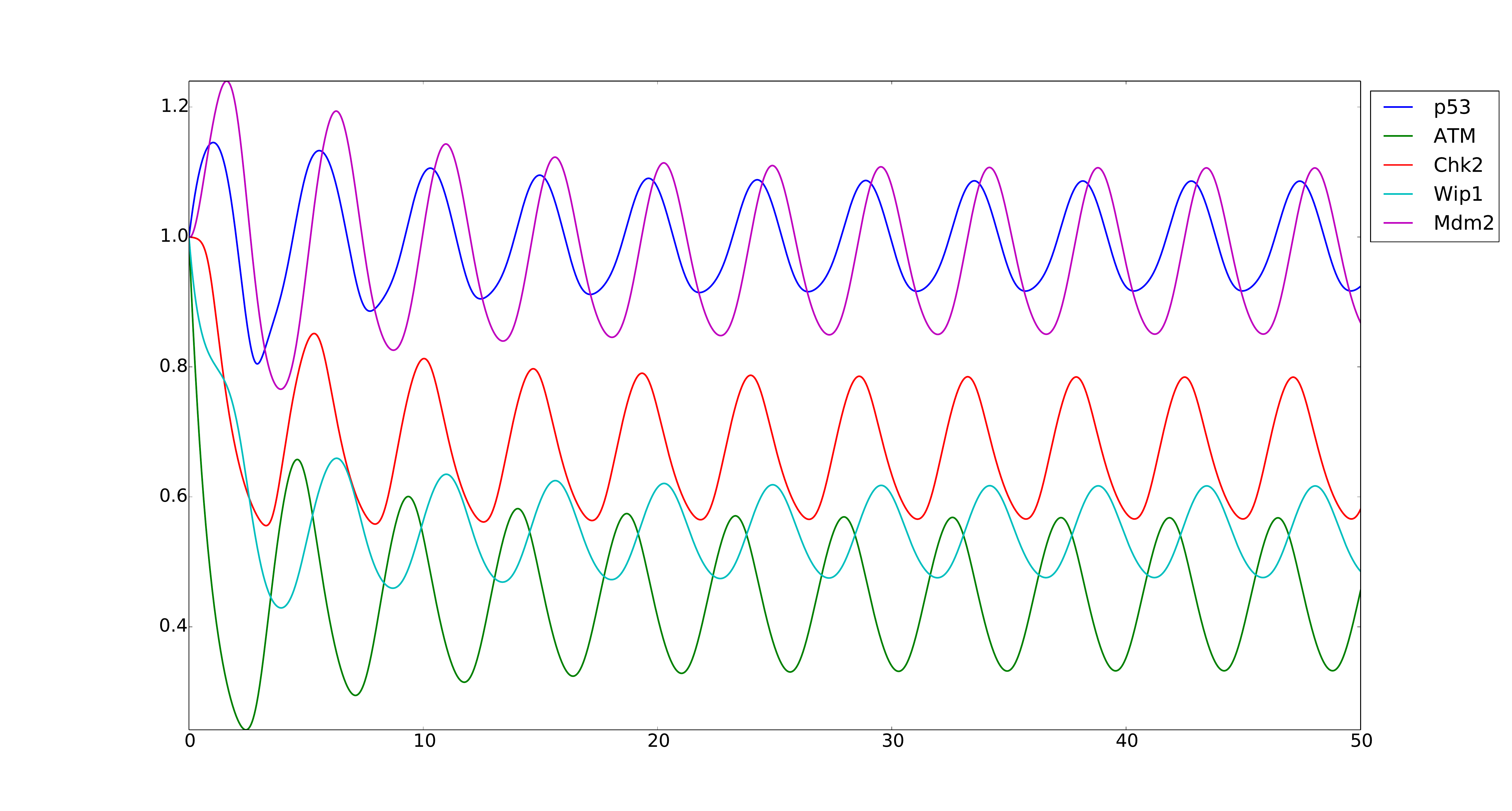}
\caption{Hill function simulation for the p53 model at parameter node 40535. See the text for parameter choices. The Hill exponent for every nonlinearity is $n=8$.}
\label{fig:p53Hill}
\end{figure}

This last example is meant to indicate the usefulness of the CAD description of the nodes in parameter space.
It leads to trivial algorithms for choosing specific parameter points.
Therefore, this opens the possibility for efficiently studying numerical simulations to understand finer structure of the dynamics that satisfies the local and global properties of the annotated Morse graph. 

\bigskip
\noindent{\bf Acknowledgements:}
We thank Chang Chan for bringing the p53 network to our attention.
T.G. was partially supported by  NSF grants DMS-1226213, DMS-1361240, DARPA D12AP200025 and NIH R01 grant 1R01AG040020-01.
B.C. was partially supported by  DARPA  grant D12AP200025.
K.M. and S.H. were partially supported by NSF-DMS-0835621, 0915019, 1125174, 1248071, and contracts from AFOSR and DARPA.

%

\bibliographystyle{plain}
\bibliography{switching_bib}

\end{document}